\documentclass[british]{amsart}
\usepackage[T1]{fontenc}
\usepackage[utf8]{inputenc}
\usepackage[dvipsnames]{xcolor}
\usepackage[numbers]{natbib}
\usepackage{amstext,amsthm,amssymb}
\usepackage{xparse,mathrsfs,mathtools}
\usepackage{csquotes}
\usepackage{bookmark}
\usepackage{booktabs}
\usepackage[capitalise,nameinlink]{cleveref}
\usepackage{babel}
\usepackage[yyyymmdd,hhmmss]{datetime}
\usepackage{hyperref}
\usepackage{lmodern,microtype}

\hypersetup{
	unicode=true,%
	pdfdisplaydoctitle=true,%
	pdfpagemode=UseOutlines,%
	breaklinks=true,%
	pdfencoding=unicode,psdextra,
	pdfcreator={},
	pdfborder={0 0 0},
	hidelinks
}

\newtheorem{Theorem}{Theorem}[section]

\DeclareMathOperator{\GL}{GL}

\DeclarePairedDelimiter\parentheses{\lparen}{\rparen}

\DeclarePairedDelimiter\abs{\lvert}{\rvert}
\DeclarePairedDelimiter\norm{\lVert}{\rVert}

\DeclareMathOperator{\eOpname}{e}
\NewDocumentCommand\e{ s O{} m }{
	\IfBooleanTF{#1}{%
		\eOpname_{#2}\parentheses[\big]{#3}%
	}{\eOpname_{#2}\parentheses{#3}}%
}

\numberwithin{equation}{section}
\multlinegap=0pt
\crefformat{equation}{#2(#1)#3}
\crefformat{enumi}{#2(#1)#3}

\usepackage[dvipsnames]{xcolor}
\usepackage{tikz}

\usetikzlibrary{calc}
\usetikzlibrary{arrows}
\usetikzlibrary{backgrounds}
\usetikzlibrary{positioning}
\usetikzlibrary{decorations.pathreplacing}

\definecolor{othercolor}{gray}{0.80}

\definecolor{othercolorTwo}{rgb}{1,1,1}

\title{Diophantine approximation with prime restriction in function fields}

\subjclass[2010]{Primary: 11J71, 11R44, 11R59; Secondary: 11J25, 11J71, 11L20, 11L40, 11M38, 11N05, 11N13}

\keywords{Distribution modulo one, function fields, distribution of prime ideals, Hecke $L$-functions, Diophantine inequalities}

\author{Stephan~Baier}
\address{Stephan~Baier\\%
	Ramakrishna Mission Vivekananda Educational Research Institute\\%
	Department of Mathematics\\%
	G.\ T.\ Road, PO~Belur Math, Howrah, West Bengal~711202\\%
	India}
\email{stephanbaier2017@gmail.com}
\urladdr{https://www.researchgate.net/profile/Stephan\_Baier2}

\author{Esrafil Ali Molla\smallskip\\
\MakeLowercase{WITH AN APPENDIX BY} Arijit Ganguly}
\address{Esrafil Ali Molla\\%
	Ramakrishna Mission Vivekananda Educational Research Institute\\%
	Department of Mathematics\\%
	G.\ T.\ Road, PO~Belur Math, Howrah, West Bengal~711202\\%
	India}
\email{esrafil.math@gmail.com}

\address{Arijit Ganguly\\ Department of Mathematics and Statistics\\
Indian Institute of Technology Kanpur\\
Kanpur-208016, India}
\email{arijit.ganguly1@gmail.com}

\begin{document}

\begin{abstract} In the thirties of the last century, I.\ M.\ Vinogradov  established uniform distribution modulo 1 of the sequence $p\alpha$ when $\alpha$ is a fixed irrational real number and $p$ runs over the primes. In particular, he showed that the inequality $||p\alpha||\le p^{-1/5+\varepsilon}$ has infinitely prime solutions $p$, where $||.||$ denotes the distance to a nearest integer. This result has subsequently been improved by many authors. The current record is due to Matom\"aki (2009) who showed the infinitude of prime solutions of the inequality $||p\alpha||\le p^{-1/3+\varepsilon}$. This exponent $1/3$ is considered the limit of the current technology. We prove function field analogues of this result for the fields $k=\mathbb{F}_q(T)$ and imaginary quadratic extensions $K$ of $k$. Essential in our method is the Dirichlet approximation theorem for function fields which is established in general form in the appendix authored by Arijit Ganguly. 
\end{abstract}

\maketitle


\tableofcontents

\section{Introduction}
Throughout this article, let $\varepsilon$ be an arbitrary but fixed positive real number. 

A fundamental theorem in Diophantine approximation is the following theorem due to Dirichlet. 

\begin{Theorem} \label{Dirich}
Given any real irrational $\alpha$, there are infinitely many pairs $(a,q)\in \mathbb{Z}\times \mathbb{N}$ of relatively prime integers such that 
\[
	\abs*{ \alpha - a/q } < q^{-2}.
\]
\end{Theorem}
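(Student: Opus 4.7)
The plan is to prove this by Dirichlet's box principle, producing a single good approximation for each bounding parameter $N$ and then leveraging infinitely many choices of $N$ to generate infinitely many approximations.

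First I would fix an integer $N \geq 1$ and consider the $N+1$ fractional parts $\{n\alpha\} \in [0,1)$ for $n = 0, 1, \ldots, N$. Partitioning $[0,1)$ into $N$ half-open intervals of length $1/N$, the pigeonhole principle yields indices $0 \leq j < k \leq N$ with $\{j\alpha\}$ and $\{k\alpha\}$ lying in the same sub-interval. Setting $q := k - j \in \{1, \ldots, N\}$ and letting $a \in \ZZ$ be the integer satisfying $|q\alpha - a| < 1/N$, one gets
\[
    \abs*{\alpha - \frac{a}{q}} < \frac{1}{qN} \leq \frac{1}{q^{2}}.
\]
If $d := \gcd(a,q) > 1$, replacing $(a,q)$ by $(a/d, q/d)$ yields a coprime pair for which the inequality is only sharpened (since $q/d < q$), so we may assume $\gcd(a,q) = 1$ from the start.

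To upgrade from existence to infinitude, I would argue by contradiction. Suppose only finitely many coprime pairs $(a_1,q_1), \ldots, (a_r,q_r)$ satisfy the inequality. Since $\alpha$ is irrational, each quantity $|\alpha - a_i/q_i|$ is strictly positive; set
\[
    \delta := \min_{1 \leq i \leq r} \abs*{\alpha - \frac{a_{i}}{q_{i}}} > 0.
\]
Choosing $N$ large enough that $1/N < \delta$, the construction above furnishes a coprime pair $(a,q)$ with $|\alpha - a/q| < 1/(qN) \leq 1/N < \delta$, which is strictly smaller than all the $|\alpha - a_i/q_i|$ and hence distinct from each $(a_i,q_i)$, contradicting the assumed finiteness.

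The main obstacle is essentially packaging the pigeonhole step cleanly; once the single-$N$ approximation is secured, both the coprimality reduction and the infinitude argument are standard and short. No deeper input (continued fractions, Farey sequences, or geometry of numbers) is strictly necessary for the statement as formulated, though any of those would give alternative proofs.
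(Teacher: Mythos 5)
Your proof is correct: the pigeonhole construction, the coprimality reduction (the bound $q^{-2}$ only weakens when $q$ is replaced by $q/d$), and the contradiction argument for infinitude all go through. The paper itself gives no proof of this theorem, merely remarking that it follows from the pigeonhole principle or continued fractions, and your argument is exactly the standard pigeonhole route it alludes to.
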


Equivalently, 
\begin{equation}\label{eq:DirichletApprox}
    ||q\alpha|| < q^{-1}
\end{equation}
for infinitely positive integers $q$. This result is easy to prove using the pigeonhole principle or the continued fraction expansion of $\alpha$. The latter gives explicit solutions to the above inequalities.  

It is natural to study the solubilty of \eqref{eq:DirichletApprox} when $q$ is restricted to subsets of the positive integers. The set of primes is a particularly interesting subset. The question now becomes for which $\theta>0$ one can establish the infinitude of primes $p$ such that 
\begin{equation}\label{eq:DirichletApprox:PrimeConstraint}
	\norm{ p\alpha } < p^{-\theta+\varepsilon}.
\end{equation}
In other words, we are interested in good approximations of $\alpha$ by fractions with prime denominator. 
This problem has a long history and stimulated the development of certain important tools in analytic number theory. In the following we give a brief overview of this history. 

In the thirtees of the last century, I.\ M.\ Vinogradov \cite{vinogradov2004themethod} established the infinitude of primes $p$ satifying \eqref{eq:DirichletApprox:PrimeConstraint} with $\theta = 1/5$ by a sophisticated non-trivial treatment of trigometrical sums over primes, which also allowed him to prove the ternary Goldbach conjecture for sufficiently large odd intergers without assuming any hypothesis like the Riemann Hypothesis.  In 1978, Vaughan \cite{vaughan1978onthedistribution} improved the exponent 1/5 to 1/4, using his famous identity for the von Mangoldt function and refined Fourier analytic arguments. This has been improved by Harman \cite{harman1983on-the-distribution} to 3/10 in 1983 using his sieve method which has since become an important tool in sieve theory. There were several improvements by Jia and Harman who refined numerical calculations and the sieve method itself (see \cite{jia1993on-the-distribution}, \cite{harman1996on-the-distribu} and \cite{jia2000on-the-distribution}). New ideas enabled Heath-Brown and Jia \cite{heath-brown2002the-distribution} to come very close to the exponent $1/3$ (they established $\theta=9/28)$, and $\theta=1/3$ was eventually achieved by Matom\"aki \cite{matomaki2009the-distribution}. In particular, Heath-Brown and Jia made a connection to Kloosterman sums, and Matom\"aki utilized estimates for sums of Kloosterman sums, as established by Deshouillers and Iwaniec in their celebrated work \cite{DeIw}. This exponent 1/3 is considered to be the limit of the current technology, and it is known that it is not hard to establish it under the Generalized Riemann Hypothesis (GRH) for Dirichlet $L$-functions (see the comments in \cite{heath-brown2002the-distribution}). 

Analogues of these results for quadratic number fields have recently been obtained by Harman, the first-named author, Mazumder and Technau in several papers (see \cite{Harman2019}, \cite{BT}, \cite{BM}, \cite{BMT}) who achieved an analogue of Harman's exponent $7/22$ in this setting. Here we consider an analogue for function fields. This article is organized as follows. We first give a short proof of Matom\"aki's above-mentioned result ($\theta=1/3$) under the Generalized Riemann Hypothesis (GRH) for Dirichlet $L$-functions (see Theorem \ref{Mato} below). As already mentioned, it is known that GRH implies this exponent, but we didn't find any place in the literature where this is worked out explicitly. Then we translate this proof into a version for function fields $\mathbb{F}_q(T)$, where $q\ge 7$ is an odd prime power (see Theorem \ref{FunctionfieldsMato} below). (For the cases $q=4,5$, we obtain weaker exponents along the same lines, but for $q=2,3$, we are not able to obtain a non-trivial result.) In this setting, GRH is known to hold and therefore our result is unconditional. After having established this analogue of Matom\"aki's result for function fields $\mathbb{F}_q(T)$, we work out a generalization for imaginary quadratic extensions of $\mathbb{F}_q(T)$ (see Theorem \ref{goaltheo} below).    \\ \\
{\bf Acknowledgements.} We would like to thank the Ramakrishna Mission Vivekananda Educational and Research Insititute for an excellent work environment. The second-named author thanks UGC NET fellowship for providing financial support.  

\section{A short proof of Matom\"aki's result under GRH} 
\label{distri}
We now give a proof of the above-mentioned result of Matom\"aki under the assumption of GRH. 

\begin{Theorem}[Matom\"aki] \label{Mato}
Given any real irrational $\alpha$, there are infinitely many primes $p$ such that
$$
	||p\alpha|| < p^{-1/3+\varepsilon}.
$$
\end{Theorem}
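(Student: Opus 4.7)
The plan is to follow Matom\"aki's original unconditional argument, which combines Harman's sieve with the Deshouillers--Iwaniec bounds on averaged sums of Kloosterman sums. For a fixed irrational $\alpha$ and large $N$, set $\delta=N^{-1/3+\varepsilon}$ and $H=N^{1/3-\varepsilon/2}$. Using a Vaaler-type Fourier approximation of the indicator of $(-\delta,\delta)\pmod 1$, one obtains
$$\#\{p\le N:\|p\alpha\|<\delta\} = 2\delta\,\pi(N) + \sum_{1\le|h|\le H} c_h \sum_{p\le N} e(hp\alpha) + O(N^{2/3}),$$
with $|c_h|\ll\min(\delta,1/|h|)$. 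The main term has order $N^{2/3+\varepsilon/2}/\log N$, so it suffices to prove that $S(\alpha,h,N):=\sum_{p\le N} e(hp\alpha)\ll N^{1-\eta}$ uniformly for $1\le h\le H$ and some $\eta=\eta(\varepsilon)>0$; the crux is that this bound is to be obtained without any unproved hypothesis.

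To estimate $S(\alpha,h,N)$, I would apply Dirichlet's theorem to write $h\alpha = a/q+\beta$ with $(a,q)=1$, $q\le Q$, $|\beta|\le 1/(qQ)$, for a suitable power $Q$ of $N$, and then decompose via Harman's alternative sieve into linear (Type I) sums $\sum_{n\sim N_1}a_n\sum_{m}e(hnm\alpha)$ and bilinear (Type II) sums $\sum_{n\sim N_1}\sum_{m\sim N_2}a_n b_m e(hnm\alpha)$ with $N_1N_2\asymp N$, where $N_1,N_2$ range over intervals dictated by the sieve. Harman's sieve permits a wider range of $N_1$ than Vaughan's identity, at the cost of losing only a controlled proportion of primes. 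The Type I sums are handled classically: the inner geometric progression gives $\sum_m e(hnm\alpha)\ll\min(N/n,\|hn\alpha\|^{-1})$, and summation over $n$ via the Dirichlet approximation of $h\alpha$ yields the required power saving.

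The main obstacle is the Type II bilinear sum in the critical range where neither $n$ nor $m$ is particularly long. Apply Cauchy--Schwarz to pass to a sum over pairs $(m_1,m_2)$, open the resulting expression and, after dyadic decomposition, apply a second Cauchy--Schwarz. One is led to averages of Kloosterman sums of the form $\sum_{c\sim C}\sum_{d\sim D}g(c,d)\,S(u,v;cd)$ with smooth weights, and non-trivial bounds on these---beyond the Weil pointwise bound---are exactly what the Deshouillers--Iwaniec theorem provides, via Kuznetsov's trace formula and the spectral theory of $\GL(2)$ automorphic forms. This unconditional spectral input is what enables the improvement from the Heath-Brown--Jia exponent $9/28$ to $1/3$; orchestrating it with the Type I bound across all admissible $N_1$ in the Harman decomposition (so that no range is left unworkable) is the main bookkeeping challenge and replaces any appeal to GRH.

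Combining the Type I and Type II bounds yields $S(\alpha,h,N)\ll N^{1-\eta}$ uniformly for $1\le h\le H$. Substituting into the Fourier expansion of the first paragraph shows that the error sum is $o(\delta N/\log N)$, so the count of primes $p\le N$ with $\|p\alpha\|<p^{-1/3+\varepsilon}$ is asymptotic to $2\delta\,\pi(N)$ and in particular tends to infinity, proving the theorem. The principal difficulty is the Type II estimate via averaged Kloosterman sums; the remaining steps are intricate but technical assemblies of established sieve and Fourier analytic techniques.
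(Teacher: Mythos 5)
Your route is genuinely different from the paper's: the paper gives a deliberately short \emph{conditional} proof, assuming GRH for Dirichlet $L$-functions. It takes a Dirichlet approximation $|\alpha-a/q|\le q^{-2}$, reduces the problem to counting primes $N<p\le 2N$ with $pa\equiv b\pmod q$ for $0<b\le q\delta/2$, detects the congruence with Dirichlet characters, inserts the GRH bound $\sum_{N<p\le 2N}\chi(p)\ll N^{1/2}\log N$ for $\chi\ne\chi_0$, controls the $b$-sums by Cauchy--Schwarz and orthogonality, and then chooses $N$ so that $q\asymp N^{2/3-\varepsilon/2}$; no exponential sums, no sieve, no Kloosterman sums. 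You instead sketch Matom\"aki's unconditional argument (Harman's sieve plus Deshouillers--Iwaniec), which is a legitimate alternative in principle, but as written it has real gaps.

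First, your reduction is quantitatively wrong: the main term is $\asymp\delta N/\log N=N^{2/3+\varepsilon}/\log N$, while $\sum_{1\le|h|\le H}|c_h|\asymp\log N$, so a bound $S(\alpha,h,N)\ll N^{1-\eta}$ for ``some $\eta=\eta(\varepsilon)>0$'' is not sufficient; you would need $\eta\ge 1/3-\varepsilon$, i.e.\ essentially square-root cancellation in the exponential sum over primes, uniformly for all $h\le N^{1/3-\varepsilon/2}$. No such pointwise bound is known unconditionally (indeed this is precisely why the problem is hard), and Matom\"aki does not prove one: in her argument the average over $h$ is kept inside the sieve decomposition, and the Deshouillers--Iwaniec estimates are applied to bilinear forms averaged over $h$ as well as over the Type II variables --- the $h$-average is essential for the Kloosterman-sum averaging to beat Weil's bound. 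Second, the actual heart of the proof --- the Type II estimate in the critical range, and the verification that Harman's sieve produces a positive lower-bound constant once the unworkable ranges are discarded --- is only gestured at (``bookkeeping challenge'', ``technical assemblies''). As it stands the proposal is an outline of the known literature rather than a proof; by contrast, the paper's point is that assuming GRH collapses all of this into a two-page computation, which is exactly the computation it then transplants to function fields, where GRH is a theorem and the result becomes unconditional.
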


\begin{proof} ({\it under GRH})
It suffices to show that there exists an infinite sequence of real numbers $N\ge 1$ tending to infinity such that
\begin{equation} \label{goal}
\sum\limits_{\substack{N<p\le 2N\\ ||\alpha p||\le N^{-1/3+\varepsilon}}} 1 >0.
\end{equation}
As usual in this type of problems, our strategy is as follows. Theorem \ref{Dirich}, Dirichlet's approximation theorem, ensures the existence of infinitely many natural numbers $q$ such that 
\begin{equation} \label{Diri}
\left|\alpha -\frac{a}{q}\right|\le q^{-2}
\end{equation}
for some $a\in \mathbb{Z}$ with $(a,q)=1$.
We take any of these $q$'s, choose $N=q^{\tau}$ with $\tau>0$ suitable and prove \eqref{goal} for this $N$, making use of \eqref{Diri}.

Throughout the sequel, we set 
\begin{equation} \label{delta}
\delta:=N^{-1/3+\varepsilon}.
\end{equation}
We observe that under the condition \eqref{Diri}, we have
$$
||\alpha p||\le \delta
$$
for $N<p\le 2N$, provided that
\begin{equation} \label{cond}
\frac{2N}{q^2}\le \frac{\delta}{2}
\end{equation}
and 
$$
pa\equiv b \bmod{q} \mbox{ with } (b,q)=1 \mbox{ and } 0<b\le \frac{q\delta}{2}.
$$
Hence, it suffices to show that
\begin{equation} \label{suff}
S:=\sum\limits_{\substack{0<b\le q\delta/2\\ (b,q)=1}} \sum\limits_{\substack{N<p\le 2N\\ pa \equiv b \bmod{q}}} 1 \gg 1.
\end{equation}
We detect the congruence condition above using Dirichlet characters, getting
\begin{equation}\label{S}
S= \frac{1}{\varphi(q)}\cdot \sum\limits_{\chi \bmod q} \chi(a) \sum\limits_{0<b\le q\delta/2} \overline{\chi}(b)
\sum\limits_{N<p\le 2N}  \chi(p).
\end{equation}

Assuming GRH for Dirichlet $L$-functions, we know that (see \cite[page 116]{Bru})
$$
\sum\limits_{N<p\le 2N} \chi(p) = \begin{cases}  \int\limits_{N}^{2N} \frac{dt}{\log t} + O\left(N^{1/2}\log N\right) & \mbox{ if } \chi=\chi_0,\\
O\left(N^{1/2}\log N\right) & \mbox{ if } \chi\not=\chi_0\end{cases}
$$
if $q\le N$, where $\chi_0$ is the principal character modulo $q$. Plugging this into \eqref{S}, we get
\begin{equation}\label{all}
\begin{split}
S= & \frac{1}{\varphi(q)} \cdot  \int\limits_{N}^{2N} \frac{dt}{\log t} \cdot  \sum\limits_{\substack{0<b\le q\delta/2\\
(b,q)=1}} 1 +
O\left(\frac{N^{1/2}\log N}{\varphi(q)}\cdot \sum\limits_{\chi \bmod q}
\left| \sum\limits_{0<b\le q\delta/2} \overline{\chi}(b) \right|\right).
\end{split}
\end{equation}
Using Cauchy-Schwarz, expanding the modulus square and employing orthogonality relations for Dirichlet characters,  we have
\begin{equation} \label{CS}
\begin{split}
\sum\limits_{\chi\bmod q} \left|\sum\limits_{0<b\le q\delta/2} \overline{\chi}(b) \right| \le &
\varphi(q)^{1/2} \left(\sum\limits_{\chi \bmod{q}} \left| \sum\limits_{0<b\le q\delta/2} \overline{\chi}(b)\right|^2\right)^{1/2}\\
= & \varphi(q)^{1/2} \left(\sum\limits_{\chi \bmod{q}} \ \sum\limits_{0<b_1,b_2\le q\delta/2} \overline{\chi}(b_1)\chi(b_2) \right)^{1/2}\\
= & \varphi(q)^{1/2} \left(\varphi(q)\sum\limits_{\substack{0<b_1,b_2\le q\delta/2\\ b_1\equiv b_2\bmod{q}}} 1\right)^{1/2}\\
\le & \varphi(q) (q\delta)^{1/2}
\end{split} 
\end{equation}
if $\delta< 1$ since in this case, $0<b_1,b_2\le q\delta/2$ and $b_1\equiv b_2\bmod{q}$ imply $b_1=b_2$. 

Using the relations
$$
\sum\limits_{d|n} \mu(d)=\begin{cases} 1 & \mbox{ if } n= 1,\\ 0 & \mbox{ otherwise} \end{cases}
$$
and 
$$
\sum\limits_{d|q} \frac{\mu(d)}{d}=\frac{\varphi(q)}{q}
$$
and the well-known bounds 
$$
\sum\limits_{d|q} 1 \ll_{\varepsilon} q^{\varepsilon} \quad \mbox{and} \quad \frac{q}{\varphi(q)} \ll \log\log q,
$$
we obtain the following approximation for the main term, divided by the integral $\int\limits_N^{2N} dt/\log t$ on the right-hand side of \eqref{all}.
\begin{equation} \label{main}
\begin{split}
\frac{1}{\varphi(q)} 
\sum\limits_{\substack{0<b\le q\delta/2\\ (b,q)=1}} 1
= & \frac{1}{\varphi(q)} \sum\limits_{0<b\le q\delta/2} \sum\limits_{d|(b,q)}\mu(d) \\ 
= & \frac{1}{\varphi(q)} \sum\limits_{d|q} \mu(d) \sum\limits_{\substack{0<b\le q\delta/2\\ d|b}} 1\\ = &
\frac{1}{\varphi(q)} \left(\sum\limits_{d|q} \frac{\mu(d)}{d} \cdot \frac{q\delta}{2}+O\left(\sum\limits_{d|q} 1\right)\right) \\
= & \frac{\delta}{2}+O\left(\frac{1}{q^{1-2\varepsilon}}\right).
\end{split}
\end{equation}
Combining \eqref{all}, \eqref{CS} and \eqref{main}, we get
\begin{equation*}
S =  \frac{\delta}{2} \cdot  \int\limits_{N}^{2N} \frac{dt}{\log t} +
O\left(\frac{N}{q^{1-2\varepsilon}}+(q\delta)^{1/2}N^{1/2}\log N\right),
\end{equation*}
provided that $q\le N$.

Now we set
$$
N:=\left(\frac{q}{2}\right)^{2/(4/3-\varepsilon)}.
$$
Then recalling \eqref{delta}, we have
$$
\frac{2N}{q^2}=\frac{\delta}{2}
$$
in accordance with condition \eqref{cond}. Hence, 
$$
q=\frac{2N^{1/2}}{\delta^{1/2}}=2N^{2/3-\varepsilon/2},
$$
and we obtain
\begin{equation} \label{endest}
S= \frac{1}{2N^{1/3-\varepsilon}} \cdot  \int\limits_{N}^{2N} \frac{dt}{\log t} +
O\left(N^{2/3+\varepsilon/2}\right)
\end{equation}
if $\varepsilon$ is small enough. Now for $N$ sufficiently large, the main term on the right-hand side of  \eqref{endest} supercedes the error term, 
and hence
\eqref{suff} holds. Thus, the claim is established. 
\end{proof}

We note that trivial modifications in the proof of Theorem \ref{Mato} give the sharper $(\log p)^{O(1)}$ instead of the factor $p^{\varepsilon}$, but we decided to state our results with $\varepsilon$-powers for the sake of easy readability. 

In the following, we shall work out a function field analogue of the above.

\section{An analogue for the function field $\mathbb{F}_q(T)$}
\subsection{Notation} 
The following notations and conventions are used throughout the sequel. For background material on function fields, we refer the reader to \cite{Ros} and \cite{Weil}. 
\begin{itemize}
\item Let $q=p^n$ be a prime power and $\mathbb{F}_q$ be the finite field with $q$ elements. Let $\mathbb{F}_q(T)_{\infty}$ be the completion of $\mathbb{F}_q(T)$ at $\infty$
 (i.e. $\mathbb{F}_q((1/T))$).
\item The absolute value $|\cdot |$ of $\mathbb{F}_q(T)_{\infty}$ is defined as 
\begin{align*}
\left| \sum_{i= -\infty}^n a_i T^i \right| = q^n \mbox{ if }  a_n \not= 0.
\end{align*}
\item Consider the torus $\mathcal{T}= \mathbb{F}_q(T)_\infty/\mathbb{F}_q[T]$. A metric on $T$ is given by 
\begin{align*}
||x+\mathbb{F}_q[T]||:= \inf_{x'\in x+\mathbb{F}_q[T]}|x'| \quad (x\in \mathbb{F}_q(T)_{\infty}).
\end{align*}
Note that 
$\mathcal{T}$ is a compact Hausdorff space and for all $x+\mathbb{F}_q[T]\in \mathcal{T}$, we have $||x+\mathbb{F}_q[T]||\leq 1/q$. \item In more detail, if 
$$
x=\sum\limits_{i=-\infty}^{n} a_i T^i,
$$
then 
$$
||x+\mathbb{F}_q[T]||=\left|\sum\limits_{i=-\infty}^{-1} a_iT^i \right|.
$$
The sum on the right-hand side may be viewed as the fractional part of $x$. Clearly,
$$
||x||=q^{-k},
$$
where $k$ is the largest negative integer such that $a_k\not=0$. 
\item For $x\in \mathbb{F}_q(T)_{\infty}$, we also write
$$
||x||:=||x+\mathbb{F}_q[T]||. 
$$
\item If $f,g\in \mathbb{F}_q[T]$, then we write $f\approx g$ if $f$ and $g$ are associates in $\mathbb{F}_q[T]$, and we write 
$(f,g)\approx h$ if $h$ is a polynomial of maximal degree dividing both $f$ and $g$. In particular, $(f,g)\approx 1$ means that $f$ and $g$ are
relatively prime.
\item By $\mathcal{P}$
denote the set of all monic irreducible polymials $\pi \in \mathbb{F}[T]$. 
\item For $f\in \mathbb{F}_q[T]$ let $G_{f}:=\left(\mathbb{F}_q[T]/(f)\right)^{\ast}$ be the multiplicative group of units in the quotient ring
$ \mathbb{F}_q[T]/(f)$. Let $\hat{G}_f$ be the character group of $G_f$. The Dirichlet characters $\chi$ modulo $f$ are given by
$$
\chi(n)=\begin{cases} \tilde{\chi}(n+(f)) & \mbox{ if } (n,f)\approx 1,\\ 0 & \mbox{ otherwise} \end{cases}
$$
for a suitable $\tilde{\chi}\in \hat{G}_f$.
\item We define the Euler totient function on $\mathbb{F}_q[T]$ as 
$$
\varphi(f):=\sharp G_f = \sharp \hat{G}_f.
$$
\item  We define the M\"obius function on $\mathbb{F}_q[T]$ as 
$$
\mu(f):=\begin{cases} 0 & \mbox{ if } f \mbox{ is divisible by a square of a non-unit in } \mathbb{F}_q[T],\\ (-1)^{\omega(f)} & \mbox{ otherwise,} 
\end{cases}
$$ 
where $\omega(f)$ is the number of non-associate irreducible factors of $f$.
\item As usual, for $a,b\in A$ and $f\in A\setminus \{0\}$, we write $a\equiv b \bmod{f}$ if $f|(a-b)$. 
\end{itemize}

\subsection{Preliminary results}
A version of Dirichlet's approximation theorem and estimates for character sums over primes (irreducible polynomials) are available in the function field setting. We have the following results. 

\begin{Theorem} \label{Dirifunc} Let $\alpha \in \mathbb{F}_q(T)_{\infty}\setminus \mathbb{F}_q(T)$. Then there exist infinitely many pairs $(a,f)\in \mathbb{F}_q[T]\times (\mathbb{F}_q[T]\setminus \{0\})$ such that $f$ is monic, $(a,f) \approx 1$ and 
\begin{equation} \label{Diri1}
\left|\alpha -\frac{a}{f}\right|\le |f|^{-2}.
\end{equation}
\end{Theorem}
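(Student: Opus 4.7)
The plan is to imitate the classical proof of Dirichlet's theorem via the pigeonhole principle, making use of the ultrametric structure of $\mathbb{F}_q(T)_{\infty}$. Fix an integer $N\ge 1$. The set of polynomials $g\in \mathbb{F}_q[T]$ of degree at most $N$ has cardinality $q^{N+1}$, while the torus $\mathcal{T}$ splits as the disjoint union of the $q^N$ cosets of the compact open subgroup
\begin{equation*}
B_N := \{x+\mathbb{F}_q[T]\in \mathcal{T} : ||x+\mathbb{F}_q[T]||\le q^{-N-1}\}.
\end{equation*}
By pigeonhole, two distinct polynomials $g_1,g_2$ of degree at most $N$ satisfy $(g_1-g_2)\alpha+\mathbb{F}_q[T]\in B_N$, so setting $f_0:=g_1-g_2$ (nonzero, $\deg f_0\le N$) there exists $a_0\in\mathbb{F}_q[T]$ with $|f_0\alpha-a_0|\le q^{-N-1}$.

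Dividing by $|f_0|$ gives $|\alpha-a_0/f_0|\le q^{-N-1}/|f_0|$, and because $|f_0|\le q^N\le q^{N+1}$, this bound is at most $|f_0|^{-2}$. For the normalisation required in the statement, I would first replace $(a_0,f_0)$ by $(\lambda^{-1}a_0,\lambda^{-1}f_0)$ with $\lambda$ the leading coefficient of $f_0$, making $f_0$ monic without changing the quotient $a_0/f_0$; and then divide both by a monic polynomial $h$ of maximal degree dividing the pair, obtaining $(a,f)$ with $f$ monic, $(a,f)\approx 1$ and $|f|\le |f_0|$, so that
\begin{equation*}
|\alpha-a/f|=|\alpha-a_0/f_0|\le |f_0|^{-2}\le |f|^{-2}.
\end{equation*}

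For the infinitude I would argue by contradiction. Suppose only finitely many pairs $(a_1,f_1),\ldots,(a_k,f_k)$ satisfy \eqref{Diri1}. Since $\alpha\notin \mathbb{F}_q(T)$ none of the differences $\alpha-a_i/f_i$ vanishes, so $\eta:=\min_{1\le i\le k}|\alpha-a_i/f_i|>0$. Choosing $N$ large enough that $q^{-N-1}<\eta$ and reapplying the construction yields a pair $(a,f)$ with $|\alpha-a/f|\le q^{-N-1}/|f|\le q^{-N-1}<\eta$, which must therefore differ from every $(a_i,f_i)$, a contradiction.

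The argument is essentially routine once the correct ultrametric pigeonhole setup is identified; no borderline-case nuisance of the Archimedean world arises, because $B_N$ is simultaneously open and closed and the triangle inequality is strong. The only point worth checking is that the final normalisation to a monic coprime pair does not weaken the approximation, which follows at once from $|h|\ge 1$ and the multiplicativity of $|\cdot|$ on $\mathbb{F}_q(T)_{\infty}$.
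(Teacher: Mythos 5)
Your argument is correct, but it is not the route the paper takes: the paper proves Theorem \ref{Dirifunc} in one line by citing the function-field Dirichlet theorem of Ganguly--Ghosh (\cite{GaGho}, Theorem 1.1 with $n=1$), only remarking on the different normalisation of the absolute value, whereas you give a self-contained ultrametric pigeonhole proof. Your count is right ($q^{N+1}$ polynomials of degree at most $N$ against the $q^{N}$ cosets of $B_N$ in $\mathcal{T}$), the passage from $|f_0\alpha-a_0|\le q^{-N-1}$ and $|f_0|\le q^{N}$ to $|\alpha-a_0/f_0|\le |f_0|^{-2}$ is valid, the monic/coprime normalisation only helps since $|f|\le |f_0|$ and the quotient $a_0/f_0$ is unchanged, and the contradiction argument for infinitude works because $\alpha\notin\mathbb{F}_q(T)$ forces $\eta>0$ (the only cosmetic point is the degenerate case of an empty list of pairs, which your first construction already rules out). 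What the elementary proof buys is independence from the literature and in fact a slightly stronger intermediate bound $q^{-N-1}/|f_0|$; what the paper's citation buys is brevity and coherence with the rest of the article, since the same circle of ideas --- the general $S$-arithmetic Dirichlet theorem proved adelically in the appendix --- is needed anyway for the imaginary quadratic extensions, where $\mathbf{A}$ need not be a PID, one only gets an inequality with a constant $c$, and the naive normalisation to coprime pairs is no longer available (see Remark 1 of the paper).
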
 

\begin{proof}
This is a consequence of \cite[Theorem 1.1]{GaGho} with $n=1$. Note that the authors define $|f|=e^{\deg f}$, whereas we take $|f|=q^{\deg f}$ if $f\in \mathbb{F}_q[T]$.   
\end{proof}

\begin{Theorem}
Let $\chi$ be a Dirichlet character modulo $f\in \mathbb{F}_q[T]$ and $N\in \mathbb{N}$. Then 
\begin{equation} \label{primesums}
\sum\limits_{\substack{\pi \in \mathcal{P}\\ \deg(\pi)=N}} \chi(\pi) = \begin{cases}  \frac{q^N}{N} + O\left(\frac{\deg(f)\cdot q^{N/2}}{N}\right) & \mbox{ if } \chi=\chi_0,\\
O\left(\frac{\deg(f)\cdot q^{N/2}}{N}\right) & \mbox{ if } \chi\not=\chi_0\end{cases}
\end{equation}
if $\deg f\le N$, where $\chi_0$ is the principal character modulo $f$.
\end{Theorem}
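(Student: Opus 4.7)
The plan is to work with the Dirichlet $L$-function attached to $\chi$,
\[
L(u,\chi)=\sum_{\substack{g\in\mathbb{F}_q[T]\\ g\text{ monic}}} \chi(g)\,u^{\deg g},\qquad u=q^{-s},
\]
viewed as a formal power series in $u$. Taking its logarithmic derivative and matching coefficients gives
\[
u\,\frac{L'(u,\chi)}{L(u,\chi)}=\sum_{n\geq 1}\Lambda_\chi(n)\,u^n,\qquad \Lambda_\chi(n):=\sum_{\substack{\deg g=n\\ g\text{ monic}}}\Lambda(g)\chi(g),
\]
where $\Lambda$ is the function field von Mangoldt function with $\Lambda(\pi^k)=\deg\pi$ for $\pi\in\mathcal{P}$. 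The problem thus reduces to estimating $\Lambda_\chi(N)$ and then peeling off the contribution from prime powers.

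For non-principal $\chi$, I would first pass to the primitive character $\chi^*$ modulo some $f^*\mid f$ that induces $\chi$; the induced and primitive $L$-functions differ by the finite product $\prod_{\pi\mid f,\,\pi\nmid f^*}(1-\chi^*(\pi)u^{\deg\pi})$, whose logarithmic derivative contributes to the coefficient of $u^N$ a quantity bounded in absolute value by $\sum_{\pi\mid f}\deg\pi\leq\deg f$. By Weil's theorem (the Riemann Hypothesis for curves over finite fields), $L(u,\chi^*)$ is a polynomial in $u$ of degree at most $\deg f^*-1\le \deg f-1$ whose reciprocal roots $\alpha_i$ satisfy $|\alpha_i|=q^{1/2}$; writing $L(u,\chi^*)=\prod_i(1-\alpha_i u)$ we get $\Lambda_{\chi^*}(N)=-\sum_i\alpha_i^N$, which is bounded in absolute value by $(\deg f)\,q^{N/2}$. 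Combining the two contributions gives $|\Lambda_\chi(N)|\ll \deg(f)\,q^{N/2}$.

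For $\chi=\chi_0$, the $L$-function factors explicitly as
\[
L(u,\chi_0)=\frac{\prod_{\pi\mid f}(1-u^{\deg\pi})}{1-qu},
\]
and direct coefficient extraction in $u L'/L$ yields
\[
\Lambda_{\chi_0}(N)=q^N-\sum_{\substack{\pi\mid f\\ \deg\pi\mid N}}\deg\pi=q^N+O(\deg f).
\]

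Finally I would pass from $\Lambda_\chi$ to the prime sum by separating off prime powers:
\[
\Lambda_\chi(N)=N\sum_{\substack{\pi\in\mathcal{P}\\ \deg\pi=N}}\chi(\pi)+\sum_{\substack{k\geq 2\\ k\mid N}}\frac{N}{k}\sum_{\substack{\pi\in\mathcal{P}\\ \deg\pi=N/k}}\chi(\pi^k).
\]
The prime power remainder is bounded in absolute value by $\sum_{k\geq 2,\,k\mid N}q^{N/k}\ll q^{N/2}$, using the classical bound $\#\{\pi\in\mathcal{P}:\deg\pi=m\}\leq q^m/m$. Dividing through by $N$ and using the hypothesis $\deg f\le N$ (so that $O(\deg f)/N$ and $O(q^{N/2})/N$ are both absorbed into $O(\deg(f)\,q^{N/2}/N)$) yields the claim. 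The only non-routine ingredient is Weil's theorem for Dirichlet $L$-functions over $\mathbb{F}_q[T]$ (as developed, e.g., in Rosen, \emph{Number Theory in Function Fields}, Ch.~4); everything else is bookkeeping with generating functions.
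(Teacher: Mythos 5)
Your proposal is correct, but it is worth noting that the paper does not actually prove this theorem at all: its ``proof'' is a one-line citation to Rosen, \emph{Number Theory in Function Fields}, Chapter 4, where exactly the argument you sketch is carried out. So your route is not so much different as it is self-contained: logarithmic derivative of $L(u,\chi)$ to get $\Lambda_\chi(N)$, reduction of an imprimitive $\chi$ to the inducing primitive $\chi^*$ at the cost of $O(\deg f)$, Weil's bound on the inverse roots of the polynomial $L(u,\chi^*)$ giving $|\Lambda_{\chi^*}(N)|\le(\deg f)\,q^{N/2}$, the explicit factorization $L(u,\chi_0)=\prod_{\pi\mid f}(1-u^{\deg\pi})/(1-qu)$ for the principal character, and finally stripping prime powers at a cost of $O(q^{N/2})$. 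This is also essentially the same computation the paper itself performs later, in Section~4 (the subsection on character sums over primes), for Hecke characters of the imaginary quadratic extension, via the coefficients $C_N(\lambda)$ of $u\,\frac{d}{du}\log L(u,\lambda)$; so your write-up in effect supplies for $\mathbb{F}_q(T)$ the details the paper delegates to Rosen, in a form parallel to its own later treatment. What your version buys is an explicit, uniform dependence on $\deg f$, which is precisely what the application in Section~3 needs.

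One small imprecision: for a primitive non-principal $\chi^*$ the inverse roots $\alpha_i$ of $L(u,\chi^*)$ need not all satisfy $|\alpha_i|=q^{1/2}$; a bounded number of ``trivial'' inverse roots of absolute value $1$ can occur. Since you only use the upper bound $|\alpha_i|\le q^{1/2}$ together with the degree bound $\deg_u L(u,\chi^*)\le\deg f-1$, the estimate $|\Lambda_{\chi^*}(N)|\le(\deg f)\,q^{N/2}$ and hence your conclusion are unaffected; just state the Riemann Hypothesis input as $|\alpha_i|\in\{1,q^{1/2}\}$ (or $|\alpha_i|\le q^{1/2}$) rather than equality throughout.
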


\begin{proof} This is a consequence of GRH for Dirichlet $L$-functions for $\mathbb{F}_q(T)$ and was proved in \cite[chapter 4]{Ros}. 
\end{proof}

\subsection{Diophantine approximation with prime denominator in the function field $\mathbb{F}_q(T)$}
Now we establish the following analogue of Theorem \ref{Mato} for $\mathbb{F}_q(T)$. The restriction $q\ge 7$ below comes from the inequality \eqref{qcondition} at the end of the proof of Theorem \ref{FunctionfieldsMato}, which is satisfied only if $q\ge 7$. If $q=4,5$, we get the weaker exponent 
$$
\theta(q)=1-\frac{\log q}{\log(q(q-1)/2)}
$$
in place of $1/3$ using the same arguments, yielding $\theta(4)=0.226...$ and $\theta(5)=0.301...$. If $q=2,3$, we do not get a non-trivial result.  

\begin{Theorem} \label{FunctionfieldsMato} 
Assume that $q\ge 7$ is an odd prime power. 
Let $\alpha\in \mathbb{F}_q(T)_{\infty}\setminus \mathbb{F}_q(T)$. Then we have
$$
||\alpha \pi||\le |\pi|^{-1/3+\varepsilon}
$$
for infinitely many $\pi\in \mathcal{P}$.
\end{Theorem}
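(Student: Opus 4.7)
The approach is to mirror the GRH-based proof of Theorem \ref{Mato} line-by-line, substituting Theorem \ref{Dirifunc} for the classical Dirichlet approximation theorem and the character sum estimate \eqref{primesums} for the GRH bound on prime sums. By Theorem \ref{Dirifunc}, fix an infinite sequence of pairs $(a,f)$ with $f \in \mathbb{F}_q[T]$ monic, $(a,f) \approx 1$, $m := \deg f \to \infty$, and $|\alpha - a/f| \le |f|^{-2}$. Write $\alpha = a/f + \theta$ with $|\theta| \le q^{-2m}$, set $N := \lfloor 2m/(4/3-\varepsilon) \rfloor$, and $\delta := q^{-N(1/3-\varepsilon)}$, so that $\delta = |\pi|^{-1/3+\varepsilon}$ for any $\pi$ of degree $N$. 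The theorem then follows once I produce, for infinitely many such $f$, a prime $\pi \in \mathcal{P}$ of degree $N$ with $||\alpha\pi|| \le \delta$.

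The non-archimedean absolute value makes the preliminary reduction especially clean. Writing $a\pi \equiv b \bmod f$ with $\deg b < m$, I have $\alpha\pi = (a\pi-b)/f + b/f + \theta\pi$, the first summand being a polynomial. Since the choice of $N$ gives $|\theta\pi| \le q^{N-2m} \le \delta$, the ultrametric inequality yields $||\alpha\pi|| \le \max\{|b|/|f|, |\theta\pi|\} \le \delta$ whenever $|b| \le |f|\delta$. The condition $\deg\pi > m$ automatically forces $(\pi,f) \approx 1$, hence $(b,f) \approx 1$. It therefore suffices to show, for infinitely many $f$,
\begin{equation*}
S := \sum_{\substack{b \in \mathbb{F}_q[T] \\ 0 < |b| \le |f|\delta \\ (b,f) \approx 1}} \sum_{\substack{\pi \in \mathcal{P} \\ \deg\pi = N \\ a\pi \equiv b \bmod f}} 1 > 0.
\end{equation*}

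Detecting the congruence via orthogonality of Dirichlet characters modulo $f$ expresses $S$ in a form parallel to \eqref{S}. Substituting \eqref{primesums}, the principal character contributes the main term, which after a M\"obius inversion exactly analogous to \eqref{main} (using $\sum_{d|f} \mu(d)/|d| = \varphi(f)/|f|$ and $\sum_{d|f} 1 \ll |f|^\varepsilon$) is of size $\sim q^{N+1}\delta/N$. For the non-principal contribution I carry out the Cauchy--Schwarz argument of \eqref{CS} verbatim; the ultrametric inequality $|b_1 - b_2| \le \max(|b_1|,|b_2|) \le |f|\delta < |f|$ now forces $b_1 \equiv b_2 \bmod f$ to collapse to $b_1 = b_2$, giving
\begin{equation*}
\sum_{\chi \bmod f} \Bigl| \sum_b \overline{\chi}(b) \Bigr| \ll \varphi(f) \bigl(|f|\delta\bigr)^{1/2}.
\end{equation*}
Multiplied by the error factor $(\deg f) q^{N/2}/N$ from \eqref{primesums}, the total error is of size $\sim m \cdot q^{N/2}(|f|\delta)^{1/2}/N$.

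With $m \approx N(2/3-\varepsilon/2)$, the main term $\sim q^{N(2/3+\varepsilon)+1}/N$ then dominates the error $\sim q^{N(2/3+\varepsilon/4)}$ for $N$ sufficiently large, yielding $S > 0$ for all large $f$ in the sequence and thereby the required infinitude of primes. The main technical obstacle I foresee is one of discretisation: $m$ and $N$ are both integers, so the balancing $m = N(2/3-\varepsilon/2)$ cannot be solved exactly, and the resulting $q^{O(1)}$ losses must be absorbed by the exponential savings in $N$. I expect the hypothesis $q \ge 7$ to supply precisely the slack needed to handle these rounding losses together with the $|f|^\varepsilon$ divisor bound arising from the M\"obius inversion.
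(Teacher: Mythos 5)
Your proposal is correct and follows essentially the same route as the paper: Dirichlet approximation via Theorem \ref{Dirifunc}, the ultrametric reduction to primes in progressions with small residue, character detection, the unconditional estimate \eqref{primesums}, Cauchy--Schwarz with the collapse $b_1=b_2$, and the balance $N=\lfloor 2\deg f/(4/3-\varepsilon)\rfloor$. The one point where you diverge, and where your closing speculation misreads the structure, is the M\"obius step. The paper bounds the divisor sum trivially by $2^{\deg f}$ and uses $\varphi(f)\ge (q-1)^{\deg f}$ (see \eqref{rel2} and \eqref{main1}); the resulting error term $2^{(2/3-\varepsilon/2)N}q^{N}(q-1)^{-(2/3-\varepsilon/2)N}/N$ in \eqref{endest1} is precisely what forces the hypothesis $q\ge 7$ through \eqref{qcondition}. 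You instead invoke $\sum_{d\mid f}1\ll|f|^{\varepsilon}$; this is true over $\mathbb{F}_q[T]$ for fixed $q$ (the usual proof over $\mathbb{Z}$ carries over, with implied constant depending on $q$ and $\varepsilon$), and combined with $\varphi(f)\ge(q-1)^{\deg f}=|f|^{\log_q(q-1)}$ it makes the M\"obius error $O\bigl(q^{N}|f|^{\varepsilon-\log_q(q-1)}/N\bigr)$, which is negligible against the main term $q^{(2/3+\varepsilon)N}/N$ for every odd prime power $q\ge 3$. So in your variant the hypothesis $q\ge 7$ is simply not needed: the integer rounding of $m$ and $N$ only costs bounded powers of $q$ and is absorbed by the exponential gap, and the divisor bound does not require it either; it is the paper's choice to avoid the $|f|^{\varepsilon}$ bound that creates the condition. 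Two small things to make explicit if you write this up: justify the divisor bound in $\mathbb{F}_q[T]$ (or fall back on $2^{\deg f}$ and then genuinely use $q\ge7$ as the paper does), and state the lower bound on $\varphi(f)$ you use, since the integer-case ingredient $q/\varphi(q)\ll\log\log q$ from \eqref{main} must be replaced by a function field counterpart such as $\varphi(f)\ge(q-1)^{\deg f}$.
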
 

\begin{proof}
It suffices to show that there are infinitely many $N\in \mathbb{N}$ such that
\begin{equation} \label{goal1}
\sum\limits_{\substack{\pi \in \mathcal{P}\\ \deg(\pi) = N\\ ||\alpha \pi||\le q^{-(1/3-\varepsilon) N}}} 1 >0.
\end{equation}
Theorem \ref{Dirifunc} ensures the existence of infinitely many monic polynomials $f\in \mathbb{F}_q[T]$ such that 
\begin{equation} \label{Diri*}
\left|\alpha -\frac{a}{f}\right|\le |f|^{-2}
\end{equation}
for some $a\in \mathbb{F}_q[T]$ with $(a,f)\approx 1$.
We take any of these $f$'s, choose $N=N(f)\in \mathbb{N}$, where $N(f)$ is an increasing function in the degree of $f$ and prove \eqref{goal1} for this $N$, making use of \eqref{Diri*}. This establishes the claim. 

Throughout the following, we set
\begin{equation} \label{delta1}
\delta:= q^{-M} \quad \mbox{with } M:=\left\lceil\left(\frac{1}{3}-\varepsilon\right)N \right\rceil.
\end{equation}
We observe that under the condition \eqref{Diri*}, we have
$$
||\alpha \pi||\le \delta
$$
for $\pi\in \mathbb{F}_q[T]$ with $\deg(\pi)=N$, provided that
\begin{equation} \label{cond1}
q^N|f|^{-2}\le \delta
\end{equation}
and 
$$
\pi a\equiv b \bmod{f} \mbox{ for some } b\in \mathbb{F}_q[T] \mbox{ with } (b,f)\approx 1 \mbox{ and } 0<|b|\le |f|\delta.
$$
Hence, it suffices to show that
\begin{equation} \label{suff1}
S:=\sum\limits_{\substack{0<|b|\le |f|\delta\\ (b,f)\approx 1}} \sum\limits_{\substack{\pi \in \mathcal{P}\\ \deg(\pi)=N\\ \pi a \equiv b \bmod{f}}} 1 \gg 1.
\end{equation}

We detect the congruence condition above using Dirichlet characters, getting
\begin{equation}\label{S1}
S= \frac{1}{\varphi(f)}\cdot \sum\limits_{\chi \bmod f} \chi(a) \sum\limits_{0<|b|\le |f|\delta} \overline{\chi}(b)
\sum\limits_{\substack{\pi \in \mathcal{P}\\ \deg(\pi)=N}}  \chi(\pi). 
\end{equation}
Plugging \eqref{primesums} into \eqref{S1}, we get
\begin{equation}\label{all1}
\begin{split}
S= & \frac{1}{\varphi(f)} \cdot  \frac{q^N}{N}\cdot  \sum\limits_{\substack{0<|b|\le |f|\delta\\
(b,f)\approx 1}} 1 +
O\left(\frac{\deg(f)\cdot q^{N/2}}{N\varphi(f)}\cdot \sum\limits_{\chi \bmod f}
\left| \sum\limits_{0<|b|\le |f|\delta} \overline{\chi}(b) \right|\right).
\end{split}
\end{equation}
Using Cauchy-Schwarz, expanding the modulus square and employing orthogonality relations for Dirichlet characters, we have
\begin{equation} \label{CS1}
\begin{split}
\sum\limits_{\chi\bmod f} \left|\sum\limits_{0<|b|\le |f|\delta} \overline{\chi}(b) \right| \le &
\varphi(f)^{1/2} \left(\sum\limits_{\chi \bmod{f}} \left| \sum\limits_{0<|b|\le |f|\delta} \overline{\chi}(b)\right|^2\right)^{1/2}\\
= & \varphi(f)^{1/2} \left(\sum\limits_{\chi \bmod{f}} \ \sum\limits_{0<|b_1|,|b_2|\le  |f|\delta} \overline{\chi}(b_1)\chi(b_2) \right)^{1/2}\\
= & \varphi(f)^{1/2} \left(\varphi(f)\sum\limits_{\substack{0<|b_1|,|b_2|\le  |f|\delta\\ b_1\equiv b_2 \bmod{f}}} 1 \right)^{1/2}\\
\le & \varphi(f) (|f|\delta)^{1/2}
\end{split} 
\end{equation}
if $\delta< 1$ since in this case, $0<|b_1|,|b_2|\le  |f|\delta$ and $b_1\equiv b_2 \bmod{f}$ imply $b_1=b_2$. 
Recalling \eqref{delta1}, and using the relations
$$
\sum\limits_{\substack{d|f\\ d \mbox{\scriptsize \ monic}}} \mu(d)=\begin{cases} 1 & \mbox{ if } f\approx 1,\\ 0 & \mbox{ otherwise} \end{cases}
$$
and 
\begin{equation} \label{rel1}
\sum\limits_{\substack{d|f\\ d \mbox{\scriptsize \ monic}}} \frac{\mu(d)}{|d|}=\frac{\varphi(f)}{|f|}
\end{equation}
and the bounds
\begin{equation} \label{rel2}
\sum\limits_{\substack{d|b\\ d \mbox{\scriptsize \ monic}}} 1 \le 2^{\deg(b)} \quad \mbox{and} \quad \frac{1}{\varphi(f)}
\le (q-1)^{-\deg(f)},
\end{equation}
we get the following approximation for the main term on the right-hand side of \eqref{all1}.
\begin{equation} \label{main1}
\begin{split}
\frac{1}{\varphi(f)} 
\sum\limits_{\substack{0<|b|\le |f|\delta\\ (b,f)\approx 1}} 1= & \frac{1}{\varphi(f)} \sum\limits_{0<|b|\le |f|\delta} \sum\limits_{\substack{d|(b,f)
\\ d \mbox{\scriptsize\ monic}}} \mu(d) \\
= & \frac{1}{\varphi(f)} \sum\limits_{\substack{d|f\\ d \mbox{\scriptsize\ monic}}} \mu(d) \cdot \sum\limits_{\substack{0<|b|\le |f|\delta\\ d|b}} 1 \\
= &
\frac{1}{\varphi(f)} \sum\limits_{\substack{d|f\\ d \mbox{\scriptsize\ monic}\\ |d|\le q^{\deg(f)-M}}} \frac{\mu(d)}{|d|} \cdot q^{\deg(f)-M}\\
= & \frac{1}{\varphi(f)} \left(\sum\limits_{\substack{d|f\\ d \mbox{\scriptsize\ monic}}} \frac{\mu(d)}{|d|} \cdot q^{\deg(f)-M}+
O\left(2^{\deg(f)}\right)\right)\\
= & q^{-M}+ O\left(\frac{2^{\deg(f)}}{(q-1)^{\deg(f)}}\right).
\end{split}
\end{equation}
Combining \eqref{all1}, \eqref{CS1} and \eqref{main1}, we get
\begin{equation*}
S =  q^{-M} \cdot  \frac{q^N}{N}+
O\left(\frac{2^{\deg(f)}q^N}{(q-1)^{\deg(f)}N}+\frac{\deg(f)\cdot (|f|\delta)^{1/2}q^{N/2}}{N}\right).
\end{equation*}

Now we set
$$
N:=\left\lfloor \frac{2\deg(f)}{4/3-\varepsilon}\right\rfloor.
$$
Then recalling \eqref{delta1}, we see that the condition \eqref{cond1} is satisfied. Hence, 
$$
\left(\frac{2}{3}-\frac{\varepsilon}{2}\right) N\le \deg(f) <\left(\frac{2}{3}-\frac{\varepsilon}{2}\right) (N+1) ,
$$
and we obtain
\begin{equation} \label{endest1}
S=q^{-M} \cdot  \frac{q^N}{N}+O_q\left(\frac{2^{(2/3-\varepsilon/2)N}q^{N}}{(q-1)^{(2/3-\varepsilon/2)N}N}+
q^{(2/3+\varepsilon/4)N}\right),
\end{equation}
and the main term is bounded from below by
$$
q^{-M} \cdot  \frac{q^N}{N}\gg_q \frac{q^{(2/3+\varepsilon)N}}{N}.
$$
This supercedes the error term on the right-hand side of \eqref{endest1} if $N$ is sufficiently large and 
\begin{equation} \label{qcondition}
\frac{2^{2/3}q}{(q-1)^{2/3}}< q^{2/3},
\end{equation}
which is the case if $q\ge 7$.  Under these conditions, \eqref{suff1} holds, and thus the claim is established.
\end{proof}

\section{Generalization to imaginary quadratic extensions of $\mathbb{F}_q(T)$}
We would like to carry over these results to imaginary quadratic extensions of $\mathbb{F}_q(T)$, where $q=p^n$ is an odd prime power. 
Let us summarize the tools needed for $\mathbb{F}_q(T)$:
\begin{itemize}
\item Dirichlet approximation theorem for $\mathbb{F}_q(T)$.
\item Prime number theorem for $\chi(\pi)$.
\item Orthogonality relations for Dirichlet characters.
\end{itemize}
We first characterize imaginary quadratic extensions, then set up an analogue of our problem for them and finally extend the above tools in a suitable way.

\subsection{Basic properties of imaginary quadratic field extensions}
First we summarize some basic properties of imaginary quadratic extensions.
\begin{itemize}
\item First, we assume that $K$ is a general finite separable extension $K$ of $k = \mathbb{F}_q(T)$. We denote the {\bf integral closure} of $A=\mathbb{F}_q[T]$ in $K$ by ${\bf A}$. (Meaning the elements of $K$ which are roots of a monic polynomial with coefficients in $A$.) 
\item In general, ${\bf A}$ is not a PID. So we need to set up our problem 
without assuming unique factorization. (There are results in the literature about $K$'s for which ${\bf A}$ is a PID. See, for example, \cite{Mac}. However, in the imaginary quadratic case, there are just four such field extensions.)  Therefore, special care is required when defining coprimality. We understand two elements $a$ and $f$ of ${\bf A}$ as being coprime and write $(a,f)\approx 1$ when the principal ideals $(a)$ and $(f)$ generated by them are coprime in the sense that they don't share a common prime ideal divisor. We recall that $a$ has a multiplicative inverse modulo $(f)$, (i.e. there exists $b\in {\bf A}$ such that $ab\equiv 1 \bmod (f)$) if and only if $a$ and $f$ are coprime in the sense above. 
\item The M\"obius and the Euler totient functions are defined on the integral ideals in the usual way as multiplicative functions with the properties that $$
\mu(\mathfrak{p}^e)=\begin{cases} -1 & \mbox{ if } e=1\\ 0 & \mbox{ if } e>1 \end{cases}
$$
and 
$$
\varphi(\mathfrak{p}^e)= \sharp({\bf A}/\mathfrak{p}^e)^{\ast}=\mathcal{N}(\mathfrak{p})^{e-1} (\mathcal{N}(\mathfrak{p})-1) \mbox{ for } e\in \mathbb{N}
$$
for prime ideals $\mathfrak{p}$ in ${\bf A}$, 
where the norm of $\mathfrak{p}$ equals
$$
\mathcal{N}(\mathfrak{p})=\sharp ({\bf A}/\mathfrak{p}). 
$$  
\item 
Now we introduce {\bf imaginary quadratic extensions}. To this end, we assume that $q$ is odd.
Let $\alpha\in A$ be a square-free polynomial which is of odd degree or whose leading coefficient is not a square in $\mathbb{F}_q$ and set
$$
K := k\left(\sqrt{\alpha}\right).
$$
Then we say that the field extension $K:k$ is imaginary quadratic. This is equivalent to saying that $\alpha$ is not a square of an element in the completion $k_{\infty}$. (This explains the expression ``imaginary quadratic'' because of its analogy to imaginary quadratic number fields.) It is clear that $K$ is separable if $q$ is odd.
\item We know that ${\bf A}$ is equal to 
$A + A\sqrt{\alpha}$ (see \cite[page 248]{Ros}). Hence, every element in ${\bf A}$ can be written as 
$$
(a_0+b_0\sqrt{\alpha})+\cdots + (a_n+b_n\sqrt{\alpha})T^n,
$$
where $a_i,b_i\in \mathbb{F}_q$.
The quotient field of the ring ${\bf A}$ is $K$. 
\item {\bf Norm and trace} of an element $f = a + b\sqrt{\alpha}\in K$ are 
given as 
$$
\mbox{\bf Norm}(f) =  (a+b\sqrt{\alpha})(a-b\sqrt{\alpha})=a^2- b^2 \alpha\in k
$$
and 
$$
\mathbf{Tr}(f)=(a+b\sqrt{\alpha})+(a-b\sqrt{\alpha})=2a\in k.
$$
\item If $K$ is an imaginary quadratic extension of $k$, then there are only {\bf finitely many units} in ${\bf A}$. It is easy to prove this, as follows. Let $u=a+b\sqrt{\alpha}$ be a unit in ${\bf A}$. Then $\mbox{\bf Norm}(u)$ is a unit in $A$, which is equivalent to saying that $a^2-b^2\alpha=v\in \mathbb{F}_q\setminus \{0\}$.  Consider first the case when $\deg \alpha>0$. Then $b^2\alpha+v$ with $v\in \mathbb{F}_q\setminus \{0\}$ has odd degree or a leading coefficient which is not a square in $\mathbb{F}_q$ unless $b=0$. Hence, $b^2\alpha +v$ cannot be the square $a^2$ in $k$ unless $b=0$. Therefore, $a^2-b^2\alpha=v$ can be a unit in $A$ only if $b=0$. Hence, $\mbox{\bf Norm}(u)=a^2$, which is a unit in $A$ iff $a\in \mathbb{F}_q\setminus \{0\}$. We deduce that $u\in \mathbb{F}_q\setminus\{0\}$. Now consider the case when $\deg \alpha=0$ and hence $\alpha$ is a non-sqare in $\mathbb{F}_q$. Then $b^2\alpha+v$ with $v\in \mathbb{F}_q\setminus \{0\}$ has a leading coefficient which is not a square in $\mathbb{F}_q$ unless $b\in \mathbb{F}_q$ and $b^2\alpha+v$ is a square $a^2$ in $\mathbb{F}_q$. Hence,
$u=a+b\sqrt{\alpha}$ with $a,b\in \mathbb{F}_q$. In both cases, $u\in \mathbb{F}_q+\mathbb{F}_q\sqrt{\alpha}$ and thus we have only finitely many units in ${\bf A}$.  
\item The {\bf absolute value} on $k$ extends uniquely to an absolute value on $K$ which we also denote by $|.|_{\infty}$ or simply by $|.|$. (The situation is analogous to that of imaginary quadratic extensions of $\mathbb{Q}$.) For $f\in K$, we have 
$$
|f|=\sqrt{|\mbox{\bf Norm}(f)|}. 
$$    
\item Let $K_{\infty}$ be the completion of $K$ with respect to the absolute value $|.|$. Then we have 
$K_{\infty}=k_{\infty}(\sqrt{\alpha})=k_{\infty}+k_{\infty}\sqrt{\alpha}$. Thus every element in $K_{\infty}$ can be written in the form
$$
\sum\limits_{i=-\infty}^n (a_i +b_i\sqrt{\alpha})T^i
$$
with $a_i,b_i\in \mathbb{F}_q$.
\item The {\bf norm of an ideal} $\mathfrak{a}\subseteq {\bf A}$ is defined as 
$$
\mathcal{N}(\mathfrak{a})= \sharp\left({\bf A}/\mathfrak{a}\right).
$$
If $\mathfrak{a}=(f)$ with $f\in {\bf A}$, then 
$$
\mathcal{N}(\mathfrak{a}) = |\mbox{\bf Norm}(f)|.
$$
\item Throughout the sequel, we allow all $O$-constants to depend on the field $K$. 
\end{itemize}

\subsection{Setup of the problem}
To set up our analogue of the problem for an imaginary quadratic field $K$, we need to define a metric $||.||$ corresponding to that for the torus $\mathcal{T}$ in this context.  
\begin{itemize}
\item Consider now the torus $\mathbb{T}= K_\infty/{\bf A}$. A metric on $\mathbb{T}$ is given by 
\begin{align*}
||x+{\bf A}||:= \inf_{x'\in x+{\bf A}}|x'| \quad (x\in K_{\infty}).
\end{align*}
Note that 
$\mathbb{T}$ is again a compact Hausdorff space. 
\item In more detail, if 
$$
x=\sum\limits_{i=-\infty}^{n} (a_i+b_i\sqrt{\alpha}) T^i,
$$
then 
$$
||x+{\bf A}||=\left|\sum\limits_{i=-\infty}^{-1} (a_i+b_i\sqrt{\alpha})T^i \right|.
$$

\item For $x\in K_{\infty}$, we also write
$$
||x||:=||x+{\bf A}||. 
$$
\end{itemize}

Our goal is to prove the following theorem, extending our result on Diophantine approximation with primes to imaginary quadratic fields. 

\begin{Theorem} \label{goaltheo}
Let $K$ be an imaginary quadratic extension field extension of $k=\mathbb{F}_q(T)$, where $q\ge 7$ is an odd prime power. Suppose that $\alpha \in K_{\infty}\setminus K$. Then there exist infinitely many principal prime ideals $\mathfrak{p}\subset {\bf A}$ such that  
$$
||\alpha\pi || \le |\pi|^{-1/3+\varepsilon}
$$
for a suitable generator $\pi$ of $\mathfrak{p}$.
\end{Theorem}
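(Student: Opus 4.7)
The plan is to adapt the argument of the previous theorem to the imaginary quadratic setting, with three changes: (i) the Dirichlet approximation theorem is the general version from Ganguly's appendix applied to $\alpha \in K_\infty \setminus K$; (ii) Dirichlet characters on $(\mathbb{F}_q[T]/(f))^*$ get replaced by characters on $({\bf A}/(f))^*$ (combined, when necessary, with ideal class group characters so as to detect principality), yielding Hecke Grössencharacters modulo $(f)$; and (iii) the prime number theorem for characters is replaced by the analogous result for Hecke $L$-functions over $K$, which is unconditional since GRH holds for function field $L$-functions (Weil).

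Concretely, I would first apply the general Dirichlet theorem from the appendix to produce infinitely many pairs $(a,f) \in {\bf A} \times ({\bf A}\setminus\{0\})$ with $(a,f) \approx 1$ and $|\alpha - a/f| \le |f|^{-2}$. Fix such a pair, set $\delta := q^{-M}$ with $M := \lceil (1/3 - \varepsilon) N\rceil$ for an $N$ to be chosen, and reduce (exactly as in the $\mathbb{F}_q(T)$ case) to showing that the counting sum
\[
S := \sum_{\substack{0 < |b| \le |f|\delta \\ (b,f)\approx 1}} \ \sum_{\substack{\mathfrak{p} = (\pi) \text{ principal prime} \\ |\pi|\le q^N \\ \pi a \equiv b \bmod (f)}} 1
\]
is $\gg q^{2N/3}$, provided $q^N |f|^{-2} \le \delta$. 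Because $|\pi|^2 = \mathcal{N}(\mathfrak{p})$, the condition $|\pi| \le q^N$ corresponds to ideal norm at most $q^{2N}$, and the finiteness of units in ${\bf A}$ means summing over generators costs only a harmless constant.

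Next, I would detect the congruence $\pi a \equiv b \bmod (f)$ via characters $\chi$ of $({\bf A}/(f))^*$, and detect principality via characters of the ideal class group; together these are Hecke characters of the ray class group modulo $(f)$. Orthogonality produces a sum over such characters, and each inner sum $\sum_\mathfrak{p} \chi(\mathfrak{p})$ (over prime ideals of norm $q^{2N}$, say) is controlled by the PNT for Hecke $L$-functions: the principal Hecke character contributes a main term of size $q^{2N}/N$, and every non-principal character contributes an error of size $\ll (\deg f)\, q^{N}/N$. The outer Cauchy–Schwarz step producing the factor $\varphi((f))(|f|\delta)^{1/2}$ goes through verbatim using orthogonality of Hecke characters and the observation that $0 < |b_1|,|b_2| \le |f|\delta < |f|$ forces $b_1 = b_2$ whenever $b_1 \equiv b_2 \bmod (f)$. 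The main term extraction requires the analogue of the Möbius identities used to count $b$ with $(b,f) \approx 1$, which is straightforward as $\mu$ and $\varphi$ have been defined as multiplicative functions on integral ideals with $\sum_{\mathfrak{d} \mid (f)} \mu(\mathfrak{d})/\mathcal{N}(\mathfrak{d}) = \varphi((f))/\mathcal{N}((f))$.

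Finally, choose $N$ so that $\deg f \approx (2/3 - \varepsilon/2) N$ (playing the role of $\tau$ in the previous proof). The main term then has size $\gg_q q^{(2/3+\varepsilon)N}/N$, while the combined error term has a dominant contribution of shape $(2/(q-1)^{2/3})^{\deg f}\, q^N / N$ plus $(\deg f)(|f|\delta)^{1/2} q^N / N$, which is dominated by the main term under precisely the same numerical condition $2^{2/3} q < (q-1)^{2/3} q^{2/3}$ as in the $\mathbb{F}_q(T)$ case, i.e. for $q \ge 7$. The main obstacle is not arithmetic but structural: confirming that the ray class group machinery and the Hecke $L$-function PNT produce orthogonality and error terms in exactly the form needed, and ensuring that the unit group and failure of unique factorization are absorbed cleanly. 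Once those bookkeeping issues are addressed, the endgame is identical to the previous proof.
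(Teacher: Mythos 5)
Your outline follows the paper's strategy in broad strokes, but it fails at the very first step in a way that is not mere bookkeeping. You assume the Dirichlet approximation theorem yields pairs $(a,f)$ with $(a,f)\approx 1$, i.e.\ with the \emph{ideals} $(a)$ and $(f)$ coprime. Since ${\bf A}$ is in general not a PID, you can only cancel common element divisors of $a$ and $f$; the ideals may still share a prime ideal factor, and there is no way to re-choose the representation $a/f$ so that $\mathfrak{D}:=\gcd((a),(f))=(1)$ unless the numerator and denominator ideals of $(a/f)$ happen to be principal. Worse, if $\mathfrak{D}\neq(1)$ your sum $S$ is identically zero: $\pi a\equiv b\bmod{(f)}$ forces every prime ideal dividing both $(a)$ and $(f)$ to divide $(b)$, contradicting your constraint $(b,f)\approx 1$. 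The paper's proof has to restructure the problem around this: it sets $\mathfrak{f}:=\mathfrak{D}^{-1}(f)$, replaces coprimality of $b$ by the condition $\gcd((b),(f))=\mathfrak{D}$, rewrites the congruence as a multiplicative congruence $\pi a b^{-1}\equiv^{\ast}1\bmod{\mathfrak{f}}$ (with a separate argument, the chain of equivalences, justifying the translation), works with the character group $H(\mathfrak{f})$ built from characters of $({\bf A}/\mathfrak{f})^{\ast}$ trivial on units together with class-group characters (which introduces the factors $\sharp U(\mathfrak{f})$ and $h$), and, crucially, proves via comparison of consecutive approximants that one can select a sequence of approximations with $\mathcal{N}(\mathfrak{f})\rightarrow\infty$; without that last point the effective modulus could remain bounded and the whole argument collapses, since $|f|\rightarrow\infty$ alone is not enough.

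A second substantive omission is the count of admissible $b$. Your appeal to the M\"obius identity does not by itself give the needed asymptotic for the number of $b\in{\bf A}$ with $0<\mathcal{N}((b))\le X$ and $\mathfrak{D}\mathfrak{a}\mid(b)$: because ${\bf A}$ is not a PID, this is a count of generators of ideals in a prescribed class condition, and the paper devotes a separate subsection to it, detecting principality with class-group characters and using the analytic structure of $\zeta_K$ and the Hecke $L$-functions (the non-principal $L$-functions are polynomials in $q^{-s}$, so their coefficient sums are $O_K(1)$, while the principal one gives the exact geometric main term), arriving at ${\bf C}_K\,X/\mathcal{N}(\mathfrak{D}\mathfrak{a})+O_K(1)$. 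Aside from these two gaps, your plan (ray-type characters combined with class-group characters, PNT for Hecke $L$-functions with explicit conductor dependence, the same Cauchy--Schwarz step and the same final optimization leading to the condition $q\ge 7$) does coincide with the paper's route, but the two missing ingredients are exactly where the imaginary quadratic case genuinely differs from $\mathbb{F}_q(T)$.
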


The proof of this theorem is laid out in the remainder of this article.

\subsection{Diophantine approximation}
An essential tool in our method is the Dirichlet approximation theorem. A general version of this theorem for function fields has been established by Arijit Ganguly. This is proved in the appendix which is authored by him and of independent interest. Here we use this result to derive the following version of Dirichlet's approximation theorem for imaginary quadratic extensions of function fields. The reader may note the similarity to Theorem \ref{Dirifunc}. 

\begin{Theorem} \label{diophant} Assume that $K$ is an imaginary quadratic field extension of $k=\mathbb{F}_q(T)$. Then there exists a constant $c>0$ such that for every $x\in K_{\infty}\setminus K$, there are infinitely many elements $a/f$ of $K$ with $(a,f)\in {\bf A}\times ({\bf A}\setminus \{0\})$ such that
\begin{equation} \label{Dioappro}
\left| x -\frac{a}{f} \right|\le \frac{c}{|f|^2}.
\end{equation}
\end{Theorem}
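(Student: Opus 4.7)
The plan is to adapt Dirichlet's pigeonhole argument to the two-dimensional $k_\infty$-vector space $K_\infty=k_\infty\oplus k_\infty\sqrt{\alpha}$, treating $\mathbf{A}=A\oplus A\sqrt{\alpha}$ as a rank-$2$ $A$-lattice. This generalises the proof underlying Theorem \ref{Dirifunc} by packaging a pair of polynomials into $f=u+v\sqrt{\alpha}\in\mathbf{A}$; the appendix supplies essentially the same conclusion in a uniform framework for arbitrary finite separable extensions of $k_\infty$.

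Fix a large integer $N\ge 1$ and set
\[
F_N:=\{u+v\sqrt{\alpha}:\,u,v\in A,\ \deg u\le N,\ \deg v\le N\},\qquad \#F_N=q^{2(N+1)}.
\]
Writing $d':=\deg\alpha$, each $g=u+v\sqrt{\alpha}\in F_N$ satisfies $|g|^2=|u^2-v^2\alpha|\le q^{2N+d'}$, hence $|g|\le q^{N+d'/2}$. By the series description of $K_\infty$ recorded earlier in this section, every element of $K_\infty$ decomposes uniquely as $h+r$ with $h\in\mathbf{A}$ and $r$ in the fundamental domain $D:=\{y_1+y_2\sqrt{\alpha}:|y_1|,|y_2|\le q^{-1}\}$. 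For each $g\in F_N$ write $gx=h_g+r_g$ accordingly, and partition $D$ into $q^{2N}$ disjoint boxes by fixing the coefficients of $T^{-1},\dots,T^{-N}$ in the expansions of both $y_1$ and $y_2$; two points in the same box differ by an element of $K_\infty$ of absolute value at most $q^{-N-1+d'/2}$. Since $\#F_N=q^{2N+2}>q^{2N}$, pigeonhole supplies distinct $g_1,g_2\in F_N$ with $r_{g_1},r_{g_2}$ in a common box, so setting $f:=g_1-g_2$ and $a:=h_{g_1}-h_{g_2}$ yields $|fx-a|\le q^{-N-1+d'/2}$ and $|f|\le 2q^{N+d'/2}$, whence
\[
\left|x-\tfrac{a}{f}\right|\le\frac{q^{-N-1+d'/2}}{|f|}\le\frac{c}{|f|^2}\qquad\text{with }c:=2q^{d'-1}.
\]

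For infinitude, observe that the bound $|x-a/f|\le q^{-N-1+d'/2}$ (valid since $|f|\ge 1$) shows that the error tends to $0$ uniformly in the denominator as $N\to\infty$; if only finitely many distinct $a/f$ were produced as $N$ varies, some fixed $a_0/f_0$ would satisfy $|x-a_0/f_0|\le q^{-N-1+d'/2}\to 0$ along a subsequence, forcing $x=a_0/f_0\in K$ and contradicting $x\in K_\infty\setminus K$. The main technical obstacle is the careful book-keeping of the absolute value on $K_\infty$: one must justify the bounds $|g|\le q^{N+d'/2}$ and the box-diameter $q^{-N-1+d'/2}$ in both the inertial case ($\deg\alpha$ even) and the ramified case ($\deg\alpha$ odd), where $|\sqrt{\alpha}|=q^{d'/2}$ introduces half-integer powers of $q$ into the value group of $K_\infty$; once this is settled, the remainder is a routine two-dimensional analogue of the classical Dirichlet pigeonhole proof.
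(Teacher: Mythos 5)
Your argument is correct, but it follows a genuinely different route from the paper. The paper simply invokes the appendix: Theorem \ref{consequence} (deduced from Thunder's adelic Minkowski theorem) with $S=\{\infty\}$, $\mathscr{O}_S={\bf A}$, gives infinitely many pairs $(a,f)$ with $|fx-a|\le c/|f|$, and the conclusion $|x-a/f|\le c/|f|^2$ with infinitely many distinct fractions then follows using the finiteness of the unit group of ${\bf A}$. You instead run Dirichlet's pigeonhole directly in $K_\infty=k_\infty\oplus k_\infty\sqrt{\alpha}$, viewing ${\bf A}=A+A\sqrt{\alpha}$ as a rank-$2$ $A$-lattice: the counts $\#F_N=q^{2(N+1)}$ versus $q^{2N}$ boxes, the bounds $|g|\le q^{N+d'/2}$ and box diameter $q^{-N-1+d'/2}$ (both valid in the inert and ramified cases, since only the ultrametric inequality and $|\sqrt{\alpha}|=q^{d'/2}$ are used), and the limit argument for infinitude (using $|f|\ge 1$ and $x\notin K$) are all sound; the factor $2$ in $|f|\le 2q^{N+d'/2}$ is superfluous by the ultrametric inequality, but harmless. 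What each approach buys: yours is self-contained and elementary, avoids the adelic machinery entirely, and produces an explicit constant $c=2q^{\deg\alpha-1}$ (in fact $q^{\deg\alpha-1}$), but it is tailored to the imaginary quadratic case with the single place at infinity, exploiting the explicit basis $\{1,\sqrt{\alpha}\}$ of ${\bf A}$ over $A$ and the explicit fundamental domain; the paper's route is less explicit about the constant but applies uniformly to arbitrary finite separable extensions of $\mathbb{F}_q(T)$ and arbitrary finite sets of places $S$, which is why the appendix result is stated in that generality and is of independent interest. Your infinitude argument (letting the error tend to $0$) also quietly replaces the paper's appeal to finiteness of the unit group, and is a perfectly valid substitute.
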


\begin{proof}
We apply Theorem \ref{consequence} in the Appendix for the case when our field $K$ is an imaginary quadratic extension of $k$ and $S=\{\infty\}$ (the singleton consisting of the place at infinity). In this case, $\mathscr{O}_S={\bf A}$. Hence, Theorem 1.3. implies that for any given $x \in K_{\infty}$, there exist infinitely many $(a,f)\in {\bf A}\times ({\bf A}\setminus \{0\})$ such that 
$$
|fx-a|\le \frac{c}{|f|}
$$  
for some constant $c>0$. Recalling that the group of units in ${\bf A}$ is finite, it follows that if $x\in K_{\infty}\setminus K$, there are infinitely many elements $a/f$ of $K$ such that
$$
\left|x-\frac{a}{f}\right|\le \frac{c}{|f|^2}.
$$
\end{proof} 
$ $\\
{\bf Remark 1:} Without loss of generality, the pairs $(a,f)$ in Theorem \ref{diophant} can be assumed not to share a non-unit common divisor. However, if ${\bf A}$ is not a PID, then this doesn't imply that the ideals $(a)$ and $(f)$ are coprime. This will cause some alterations in our method, as compared to our treatment of $k=\mathbb{F}_q(T)$. An essential point in the said treatment was the existence of a sequence of coprime pairs $(a,f)$ satisfying the conditions in Theorem \ref{Dirifunc} such that $|f|\rightarrow \infty$. In our treatment of the field $K$, we shall use the fact that there exists a sequence of Dirichlet approximations $a/f$ satisfying the conditions in Theorem \ref{diophant} such that $\mathcal{N}(\gcd((a),(f))^{-1}(f))\rightarrow \infty$. This can be seen easily as follows: Using Theorem \ref{diophant}, there exists a sequence of Dirichlet approximations $a_n/f_n$ satisfying the conditions in this theorem such that $|f_n|<|f_{n+1}|$ for all $n\in \mathbb{N}$. Since $|.|$ takes only values $q^{M/2}$ with $M\in \mathbb{Z}_{\ge 0}$, it follows that $|f_n|\rightarrow \infty$ as $n\rightarrow \infty$. Now using \eqref{Dioappro} for $a/f=a_n/f_n,a_{n+1}/f_{n+1}$ together with the triangle inequality, we deduce that
$$
\left|\frac{a_n}{f_n}-\frac{a_{n+1}}{f_{n+1}}\right|\le \frac{2c}{|f_n|^2}
$$
and hence
$$
|a_nf_{n+1}-a_{n+1}f_n|\le \frac{2c|f_{n+1}|}{|f_n|},
$$
which upon squaring implies
$$
\mathcal{N}(\gcd((a_{n+1}),(f_{n+1})))\le \mathcal{N}((a_nf_{n+1}-a_{n+1}f_n))\le \frac{(2c)^2\mathcal{N}((f_{n+1}))}{\mathcal{N}((f_n))}.
$$
Therefore, we have 
$$
\mathcal{N}(\gcd((a_{n+1}),(f_{n+1}))^{-1}(f_{n+1}))\ge (2c)^{-2}\mathcal{N}((f_n))=(2c)^{-2}|f_n|^2,
$$
and hence, the sequence of $\mathcal{N}(\gcd((a_{n}),(f_{n}))^{-1}(f_n))$ also diverges, as claimed.

\subsection{Reduction of the problem to primes in arithmetic progressions}
By $\mathbb{P}_0$, we denote the set of non-zero principal prime ideals in ${\bf A}$. It suffices to show that there are infinitely many $N\in \mathbb{N}$ such that
\begin{equation} \label{goal2}
\sum\limits_{\substack{\mathfrak{p} \in \mathbb{P}_0\\ \mathcal{N}(\mathfrak{p})= q^N\\ (\pi)=\mathfrak{p}\\ ||\alpha \pi||\le q^{-(1/6-\varepsilon) N}}} 1 >0.
\end{equation}
Here we recall that $\mathcal{N}(\mathfrak{p})=|\mbox{\bf Norm}(\pi)|=|\pi|^2$ if 
$\pi$ generates $\mathfrak{p}$. We use Theorem \ref{diophant} to approximate $\alpha$ from Theorem \ref{goaltheo} in the form
\begin{equation} \label{Diri2}
\left| \alpha -\frac{a}{f} \right|\le \frac{c}{|f|^2}
\end{equation}
with $(a,f)\in {\bf A}\times ({\bf A}\setminus \{0\})$. For simplicity, we write
$$
\mathfrak{D}:=\gcd((a),(f)) \quad \mbox{and} \quad \mathfrak{f}:=\mathfrak{D}^{-1}(f).
$$
We observe that \eqref{Diri2} implies
\begin{equation} \label{Diri3}
\left|\alpha -\frac{a}{f}\right|\le \frac{c}{\mathcal{N}(\mathfrak{f})}.
\end{equation}
We shall choose a suitable $N=F(\mathcal{N}(\mathfrak{f}))\in \mathbb{N}$, where $F(x)$ is an increasing function on $\{q^{n}: n\in \mathbb{Z}_{\ge 0}\}$ with $F(x)\rightarrow \infty$ as $x\rightarrow \infty$ and prove \eqref{goal2} for this $N$, making use of \eqref{Diri3}. This establishes Theorem \ref{goaltheo} because we know from Remark 1 above that there is a sequence of pairs $(a,f)$ satisfying  \eqref{Diri2} such that $\mathcal{N}(\mathfrak{f})$ tends to infinity.

Throughout the following, we set
\begin{equation} \label{delta2}
\delta:= q^{-M/2} \quad \mbox{with } M:=\left\lceil\left(\frac{1}{3}-\varepsilon\right)N \right\rceil.
\end{equation}
We observe that under the condition \eqref{Diri3}, we have
$$
||\alpha \pi||\le \delta
$$
for $\pi\in {\bf A}$ with $|\pi|^2=q^{N}$ and $(\pi,f)\approx 1$, provided that
\begin{equation} \label{cond2}
cq^{N/2}\mathcal{N}(\mathfrak{f})^{-1}\le \delta
\end{equation}
and 
$$
\pi a\equiv b \bmod{f} \mbox{ for some } b\in {\bf A} \mbox{ with } \mathfrak{E}=\mathfrak{D} \mbox{ and } 0<|b|\le |f|\delta,
$$
where we write 
$$
\mathfrak{E}:=\gcd((b),(f)).
$$
Hence, to prove Theorem \ref{goaltheo}, it suffices to show that
\begin{equation} \label{suff2}
S:=\sum\limits_{\substack{0<|b|\le |f|\delta\\ \mathfrak{E}=\mathfrak{D}}} \sum\limits_{\substack{\mathfrak{p} \in \mathbb{P}_0, \ \mathfrak{p}\nmid \mathfrak{f}\\ \mathcal{N}(\mathfrak{p})=q^N\\ (\pi)=\mathfrak{p}\\ \pi a \equiv b \bmod{f}}} 1 \gg 1,
\end{equation}
if $N$ is large enough.
The sum $S$ above may be re-written using multiplicative congruences (see \cite{mult}, for example) in the form
\begin{equation} \label{newSdef}
S=\sum\limits_{\substack{0<|b|\le |f|\delta\\ \mathfrak{E}=\mathfrak{D}}} \sum\limits_{\substack{\mathfrak{p} \in \mathbb{P}_0, \ \mathfrak{p}\nmid \mathfrak{f}\\ \mathcal{N}(\mathfrak{p})=q^N\\ (\pi)=\mathfrak{p}\\ \pi a b^{-1}\equiv^{\ast} 1 \bmod{\mathfrak{f}}}} 1,
\end{equation}
where for $x,y\in K$, the multiplicative congruence $x\equiv^{\ast} y \bmod{\mathfrak{f}}$ means that the $\mathfrak{p}$-adic valuation of $(x-y)$ at any prime ideal $\mathfrak{p}$ dividing $\mathfrak{f}$ satisfies $v_{\mathfrak{p}}((x-y))\ge v_{\mathfrak{p}}(\mathfrak{f})$.  
This is easily seen by a chain of equivalences similar to the one at the end of subsection 4.9.

\subsection{Detecting primes in arithmetic progressions}
For every principal prime ideal $\mathfrak{p}$, fix a generator $\pi(\mathfrak{p})$. Then the sum in \eqref{newSdef} may be written as 
\begin{equation} \label{Srewr}
S=\sum\limits_{\substack{0<|b|\le |f|\delta\\ \mathfrak{E}=\mathfrak{D}}} \sum\limits_{\substack{\mathfrak{p} \in \mathbb{P}_0, \ \mathfrak{p}\nmid \mathfrak{f}\\ \mathcal{N}(\mathfrak{p})=q^N\\ \pi(\mathfrak{p}) a b^{-1}\sim 1 \bmod{\mathfrak{f}}}} 1, 
\end{equation}
where $c\sim 1 \bmod{\mathfrak{f}}$ means that $c$ is multiplicatively congruent to a unit modulo $\mathfrak{f}$. This condition can be picked out by using the orthogonality relation for the group $G(\mathfrak{f})$ of Hecke-Dirichlet characters $\chi$ modulo $\mathfrak{f}$. Below we explain the term ``Hecke-Dirichlet character''. 

First, we introduce the Dirichlet characters modulo $\mathfrak{f}$ in the usual way as multiplicative functions on ${\bf A}$ which arise from characters for the group $({\bf A}/\mathfrak{f})^{\ast}$. Then we consider the subgroup of those Dirichlet characters which are trivial on the units. These give rise to characters on the principal ideals: If $\chi$ is such a character, then 
$$
\tilde{\chi}((c)):=\chi(c)
$$
is well-defined because $\chi$ is constant on the generators of a given principal ideal. Finally, we extend these ideal characters multiplicatively to the group of all principal fractional ideals $\mathfrak{a}$ which are coprime to $\mathfrak{f}$. We set $\tilde{\chi}(\mathfrak{a}):=0$ if $\mathfrak{a}$ is not coprime to $\mathfrak{f}$ (meaning that $v_{\mathfrak{p}}(\mathfrak{a})=0$ for all prime ideals $\mathfrak{p}$ dividing $\mathfrak{f}$.     
Let $U$ be the group of units in ${\bf A}$. We note that 
$$
G(\mathfrak{f})\cong({\bf A}/\mathfrak{f})^{\ast}/U(\mathfrak{f}),
$$
where 
\begin{equation} \label{Ufdef}
U(\mathfrak{f})=\{a+\mathfrak{f}\in ({\bf A}/\mathfrak{f})^{\ast} : a\in U\}.
\end{equation}

Now the character group $G(\mathfrak{f})$ satisfies the orthogonality relation
\begin{equation} \label{orth}
\frac{1}{\sharp G(\mathfrak{f})}\sum\limits_{\tilde{\chi} \in G(\mathfrak{f})} \tilde{\chi}((x)) = \begin{cases} 1 & \mbox{ if } x\sim 1 \bmod{\mathfrak{f}},\\ 0 & \mbox{ otherwise} \end{cases}
\end{equation}
for every $x\in K$. Hence, we may pick out the congruence condition in the inner sum on the right-hand side of \eqref{Srewr} using \eqref{orth}. 
We still need to pick out the condition of a prime ideal to be principal. This can be done using class group characters. Let $\mathcal{C}=\mathcal{C}({\bf A})$ be the class group for ${\bf A}$, the quotient of the group $\mathcal{I}$ of fractional ideals and the group $\mathcal{P}$ of principal fractional ideals.  We know that this group is finite abelian. Set $h:=\sharp \mathcal{C}$. Let $X(\mathcal{C})$ be the group of characters of this group. If $\psi\in X(\mathcal{C})$ and $\mathfrak{a}$ is a fractional ideal, then we set 
$$
\tilde{\psi}(\mathfrak{a})=\psi(\mathfrak{a}\mathcal{P}).
$$
In this way we get a group of characters on the fractional ideals, which we denote by $\tilde{X}(\mathcal{C})$. Then the orthogonality relation for $\tilde{X}(\mathcal{C})$ gives
\begin{equation} \label{ortho3}
\frac{1}{h}\sum\limits_{\tilde{\psi} \in \tilde{X}(\mathcal{C})} \tilde{\psi}(\mathfrak{a}) = \begin{cases} 1 & \mbox{ if } \mathfrak{a}\in \mathcal{P}, \\ 0 & \mbox{ otherwise.} \end{cases}
\end{equation}
Every character $\tilde{\chi}\in G(\mathfrak{f})$ extends to precisely $h$ characters on the group $\mathcal{I}$ of fractional ideals. If $\tilde{\chi}_1$ is one such character, then all others look like
$\tilde{\psi}\tilde{\chi}_1$, where $\tilde{\psi}\in \tilde{X}(\mathcal{C})$. These characters form a group $H(\mathfrak{f})$. We denote by $h(\mathfrak{f})$ the cardinality of $H(\mathfrak{f})$. Hence, we may combine \eqref{orth} and \eqref{ortho3} to get
$$
\frac{1}{h(\mathfrak{f})} \sum\limits_{\chi\in H(\mathfrak{f})} \chi(\mathfrak{a})=\begin{cases} 1 & \mbox{ if } \mathfrak{a} \sim 1 \bmod{\mathfrak{f}},\\ 0 & \mbox{ otherwise,}
\end{cases}
$$
where by $\mathfrak{a} \sim 1 \bmod{\mathfrak{f}}$ we mean that $\mathfrak{a}$ is principal and $a\sim 1 \bmod{\mathfrak{f}}$ for any generator $a$ of $\mathfrak{a}$. Therefore,
our sum $S$ can be rewritten as
$$ 
S=\sum\limits_{\substack{0<|b|\le |f|\delta\\ \mathfrak{E}=\mathfrak{D}}} \sum\limits_{\substack{\mathfrak{p} \in \mathbb{P}\\ \mathcal{N}(\mathfrak{p})=q^N}} \frac{1}{h(\mathfrak{f})} \sum\limits_{\chi\in H(\mathfrak{f})} \chi(\mathfrak{p}(ab^{-1})),
$$
where $\mathbb{P}$ is the set of {\it all} prime ideals in ${\bf A}$. 
Now we re-arrange summations and use the multiplicativity of $\chi$ on the ideals coprime to $\mathfrak{f}$ (note that $(ab^{-1})$ is coprime to $(f)$ and hence to $\mathfrak{f}$ by the condition $\mathfrak{E}=\mathfrak{D})$. We deduce that
\begin{equation}\label{S2}
S= \frac{1}{h(\mathfrak{f})} \sum\limits_{\chi\in H(\mathfrak{f})} \sum\limits_{\substack{0<|b|\le |f|\delta\\ \mathfrak{E}=\mathfrak{D}}} \chi(ab^{-1}) \sum\limits_{\substack{\mathfrak{p} \in \mathbb{P}\\ \mathcal{N}(\mathfrak{p})=q^N}} \chi(\mathfrak{p}),
\end{equation}
where we write $\chi(x):=\chi((x))$ for simplicity. Now we need to evaluate the inner sum over prime ideals. We will get the main term contribution from the case when $\chi=\chi_0$ is the principal character. For the other case when $\chi\not=\chi_0$, we require an upper bound for the said sum. This is captured by the prime number theorem for $\chi(\mathfrak{p})$, which is well-known. However, we need to work out the precise dependencies of the $O$-terms on the modulus $\mathfrak{f}$. Therefore, we go through the details in the upcoming two sections. We shall need properties of Hecke $L$-functions for this purpose, which are covered in the next subsection.  

\subsection{Hecke $L$-functions}
The above characters $\chi\in H(\mathfrak{f})$ give rise to Hecke characters for $K$. Formally, Hecke characters for function fields are defined on divisors (see \cite{Ros}[Chapter 9]), so let us set this up precisely. For every prime ideal $\mathfrak{p}$ in ${\bf A}$, let ${\bf A}_{\mathfrak{p}}$ the localization at $\mathfrak{p}$ and let $P$ be the unique maximal ideal in the discrete valuation ring ${\bf A}_{\mathfrak{p}}$. The primes $P$ we get in this way are precisely the finite primes of $K$. If $K$ is imaginary-quadratic, then there is exactly one more prime at infinity. Let's denote it by $\infty$.  We know that
$$
\deg(\infty)=2
$$
and so 
$$
\mathcal{N}(\infty)=q^{2}
$$
in the notation of \cite{Ros}.
Let $\mathcal{F}$ be the divisor given by 
$$
\mathcal{F}=\sum\limits_{i=1}^{\omega} e_iP_i, 
$$
where 
$$
\mathfrak{f}=\prod\limits_{i=1}^{\omega} \mathfrak{p}_i^{e_i}
$$
is the prime factorization of $\mathfrak{f}$ and $P_i$ are the primes corresponding to 
$\mathfrak{p}_i$. 

Now for every finite prime $P$ define    
$$
\lambda(P):=\chi(\mathfrak{p}),
$$
where $\mathfrak{p}$ is the prime ideal corresponding to $P$. Set 
$$
\lambda(\infty):=1.
$$
Then $\lambda$ extends uniquely to a Hecke character modulo $\mathcal{F}$. The Hecke $L$-function for $\lambda$ satisfies
\begin{equation} \label{productformula}
\begin{split}
L(s,\lambda)= & \prod\limits_{P} \left(1-\lambda(P)\mathcal{N}(P)^{-s}\right)^{-1}=
\prod\limits_{\mathfrak{p}\in \mathbb{P}} \left(1-\chi(\mathfrak{p})
\mathcal{N}(\mathfrak{p})^{-s}\right)^{-1}\left(1-\mathcal{N}(\infty)^{-s}\right)^{-1}\\
= & \prod\limits_{\mathfrak{p}\in \mathbb{P}} \left(1-\chi(\mathfrak{p})
\mathcal{N}(\mathfrak{p})^{-s}\right)^{-1}\left(1-q^{-2s}\right)^{-1}
\end{split}
\end{equation}
for $\Re s>1$.

If $\lambda$ is primitive, then all relevant information about $L(\lambda,s)$ is in \cite[Theorem 9.24.A]{Ros}:

\begin{Theorem} \label{RosenHecke}
Let $\lambda$ be a primitive Hecke character with conductor $\mathcal{F}$ and suppose $\lambda$ is not trivial on $D^0(S)$ (the group of divisors of degree $0$ with support disjoint from $S$, the set of primes on which $\mathcal{F}$ is supported). Then $L(s,\lambda)$ is a polynomial in $q^{-s}$ of degree $2g-2+\deg_K\mathcal{F}$ (here $g$ is the genus of $K$). Define 
$$
\Lambda(s,\lambda):=q^{(g-1)s}\mathcal{N}(\mathcal{F})^{s/2}L(s,\lambda).
$$ 
Then 
$$
\Lambda(s,\lambda)=\varepsilon(\lambda)\Lambda(1-s,\overline{\lambda}),
$$
where $\varepsilon(\chi)$ is a complex number of absolute value 1. 
\end{Theorem}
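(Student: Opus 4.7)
The plan is to follow the strategy of \cite[Theorem~9.24.A]{Ros}, whose engine is Riemann--Roch applied to the smooth projective curve associated to $K$. First I would expand $L(s,\lambda)$ as a Dirichlet series over effective divisors coprime to the support $S$ of $\mathcal{F}$, grouped by degree:
$$L(s,\lambda)=\sum_{n\ge 0} A_n(\lambda) q^{-ns},\qquad A_n(\lambda)=\sum_{\substack{D\ge 0\\ \deg D=n}} \lambda(D).$$
The two statements to verify are: $A_n(\lambda)=0$ for $n>2g-2+\deg_K\mathcal{F}$ (polynomiality of the stated degree), and a symmetry $A_n(\lambda)\leftrightarrow A_{2g-2+\deg_K\mathcal{F}-n}(\overline{\lambda})$ after inserting the normalising factor (the functional equation).

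For polynomiality, I would partition the effective divisors of degree $n$ into divisor classes. Within a class $[D_0]$, the effective representatives are parameterised by $\mathbb{P}(L(D_0))$, where $L(D_0)$ is the Riemann--Roch space of dimension $\ell(D_0)$; so the class contributes $\lambda(D_0)\cdot(q^{\ell(D_0)}-1)/(q-1)$ to $A_n(\lambda)$. Once $n>2g-2+\deg_K\mathcal{F}$, Riemann--Roch gives $\ell(D_0)=n-g+1$ uniformly on the relevant classes, and $A_n(\lambda)$ collapses to a character sum of $\lambda$ over degree-$n$ divisor classes. Because $\lambda$ is non-trivial on $D^0(S)$ and the degree-$n$ classes form cosets of $D^0(S)$, this sum vanishes by orthogonality. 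For the functional equation, the Riemann--Roch involution $D\mapsto K-D$ pairs effective divisors of degree $n$ with those of degree $2g-2-n$, and the interaction with the conductor is absorbed precisely by the factor $q^{(g-1)s}\mathcal{N}(\mathcal{F})^{s/2}$ built into $\Lambda(s,\lambda)$. Unit modulus of $\varepsilon(\lambda)$ then follows either by applying the functional equation twice and comparing, or by identifying $\varepsilon(\lambda)$ as a normalised Gauss-sum-type quantity for the character $\lambda$ modulo $\mathcal{F}$.

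The main obstacle is the delicate bookkeeping around the conductor: one must invoke \emph{primitivity} of $\lambda$ to discard contributions from divisors supported on proper sub-conductors, and to verify that the powers of $q$ and $\mathcal{N}(\mathcal{F})$ align so that $|\varepsilon(\lambda)|=1$ falls out exactly. Since all of this is carried out carefully in Rosen's textbook, the economical option for the present paper is to cite \cite[Theorem~9.24.A]{Ros} directly rather than reproduce the derivation.
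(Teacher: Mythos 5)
The paper offers no proof of this statement at all: Theorem~\ref{RosenHecke} is quoted verbatim from Rosen, so your closing recommendation to simply cite \cite[Theorem~9.24.A]{Ros} is exactly what the authors do, and at that level your proposal matches the paper.

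That said, the Riemann--Roch sketch you give in the middle would not survive being written out as stated. The step in which you group the effective divisors of degree $n$ by divisor class and claim that a class $[D_0]$ contributes $\lambda(D_0)\,(q^{\ell(D_0)}-1)/(q-1)$ presupposes that $\lambda$ is constant on an entire divisor class; that is true only for unramified characters. For a primitive $\lambda$ with nontrivial conductor $\mathcal{F}$, two effective divisors in the same class differ by a principal divisor $(f)$, and $\lambda((f))$ depends on the residue of $f$ modulo $\mathcal{F}$, so $\lambda$ is constant only on ray classes modulo $\mathcal{F}$, not on divisor classes, and the parameterisation of the class by $\mathbb{P}(L(D_0))$ does not carry a constant character value. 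Note also that your argument, if it worked, would give vanishing of $A_n(\lambda)$ already for $n>2g-2$ (Riemann--Roch gives $\ell(D_0)=n-g+1$ there), i.e.\ a polynomial of degree at most $2g-2$, which contradicts the asserted degree $2g-2+\deg_K\mathcal{F}$; the conductor term enters precisely through the finer count you are skipping. The correct route (as in Rosen or Weil) sums over $f\in L(D_0)$ subject to congruence conditions modulo $\mathcal{F}$, obtains the vanishing for $n>2g-2+\deg_K\mathcal{F}$ by applying Riemann--Roch to $D_0-\mathcal{F}$ (so that reduction modulo $\mathcal{F}$ on $L(D_0)$ is surjective with fibres of equal size) combined with orthogonality over the ray class group, and uses nontriviality of $\lambda$ on $D^0(S)$ to dispose of the remaining unramified contribution; the functional equation and $|\varepsilon(\lambda)|=1$ then involve Gauss sums attached to $\lambda$ modulo $\mathcal{F}$, not only the involution $D\mapsto K-D$. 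None of this affects the paper, which, like you ultimately propose, simply invokes Rosen rather than reproving the theorem.
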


It is also known that the Riemann Hypothesis holds for $L(s,\lambda)$ above, i.e. all zeros of $L(s,\lambda)$ have real part equal to 1/2 (see \cite{Ros}).
We note that
$$
\deg_K\mathcal{F}=\log_q \mathcal{N}(\mathfrak{f}). 
$$

For completeness, we also cover the case when $\lambda$ is not a primitive character. If $\lambda$ and hence $\chi$ is not primitive, then $\chi$ is induced by a primitive Dirichlet character $\chi'$ modulo $\mathfrak{f}'$, where $\mathfrak{f}'|\mathfrak{f}$. Let $\lambda'$ be the corresponding Hecke character defined on divisors. Then $L(s,\lambda)$ and $L(s,\lambda')$ differ just by a finite Euler product, namely we have
$$
L(s,\lambda)=\left(\prod\limits_{\substack{\mathfrak{p}|\mathfrak{f}\\ \mathfrak{p}\nmid \mathfrak{f}'}} \left(1-\chi'(\mathfrak{p})\mathcal{N}(\mathfrak{p})^{-s}\right)\right) \cdot L(s,\lambda').
$$  
If $\mathfrak{f}'={\bf A}$, then $\chi'$ is trivial and hence $\chi=\chi_0$ is the principal character. In this case,
$$
L(s,\lambda')=\zeta_K(s)
$$
is the zeta function of $K$, for which we have the following result (see \cite[Theorem 5.9.]{Ros}).

\begin{Theorem} \label{Zeta} Suppose that the genus of $K$ is $g$. Then
there is a polynomial $L_K(u) \in \mathbb{Z}[u]$ of degree $g$ such that
$$
\zeta_K(s):=\frac{L_K(q^{-s})}{(1-q^{-s})(1- q^{1-s})}.
$$
This holds for all $s$ such that $\Re( s) > 1$ and the right-hand side provides an
analytic continuation of $\zeta_K(s)$ to all of $\mathbb{C}\setminus \{0,1\}$. $\zeta_K(s)$ has simple poles at $s = 0$ and $s = 1$. Set $\xi_K(s) = q^{(g-1)s}\zeta_K(s)$. Then for all $s\not=0,1$ one has the functional equation $\xi_K(1-s) = \xi_K(s)$.
\end{Theorem}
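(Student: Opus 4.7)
The plan is to follow the classical F.~K.~Schmidt argument via the Riemann--Roch theorem. First I would recast $\zeta_K(s)$ as a sum over effective divisors of $K$: using the Euler product, the correspondence between the non-zero prime ideals of ${\bf A}$ together with the place at infinity and the finite/infinite places of $K$, and unique factorisation of divisors,
\[
\zeta_K(s)=\prod_{P}\bigl(1-\mathcal{N}(P)^{-s}\bigr)^{-1}=\sum_{\mathcal{D}\ge 0}\mathcal{N}(\mathcal{D})^{-s}=\sum_{n=0}^{\infty} a_n u^n,
\]
where $u:=q^{-s}$ and $a_n$ is the number of effective divisors of $K$ of degree $n$. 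Absolute convergence for $\Re s>1$ is verified by comparison with the Dedekind zeta of $\mathbb{F}_q(T)$.

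Next I would compute $a_n$ via Riemann--Roch. Grouping effective divisors by their linear equivalence class gives
\[
a_n=\sum_{[\mathcal{C}]\,:\,\deg\mathcal{C}=n}\frac{q^{\ell(\mathcal{C})}-1}{q-1},
\]
where $\ell(\mathcal{C})=\dim_{\mathbb{F}_q}L(\mathcal{C})$, and Riemann--Roch yields $\ell(\mathcal{C})=n-g+1$ for every $\mathcal{C}$ with $\deg\mathcal{C}>2g-2$. Splitting the series at $n=2g-1$, the tail becomes two geometric series in $u$ with denominator $(1-u)(1-qu)$, while the head for $0\le n\le 2g-2$ is a finite polynomial in $u$. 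Combining over the common denominator $(1-u)(1-qu)$ produces the required expression $L_K(u)/((1-u)(1-qu))$, and integrality of the coefficients of $L_K$ follows because every $a_n$ is a non-negative integer. The simple poles at $s=0,1$ then arise from the factors $1-u$ and $1-qu$ respectively, once one checks non-vanishing of $L_K$ there; the well-known class-number formula identifies the residue with a positive multiple of $h$, which is finite because ${\bf A}$ is a Dedekind domain of a function field.

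Finally, for the functional equation I would exploit Riemann--Roch duality. Writing $\mathcal{K}$ for a canonical divisor, the involution $[\mathcal{C}]\mapsto[\mathcal{K}-\mathcal{C}]$ permutes divisor classes, satisfies $\deg(\mathcal{K}-\mathcal{C})=2g-2-\deg\mathcal{C}$, and obeys $\ell(\mathcal{C})-\ell(\mathcal{K}-\mathcal{C})=\deg\mathcal{C}-g+1$. Substituting this identity into the grouped sum for $\zeta_K(s)$ and splitting at $\deg\mathcal{C}=g-1$ produces a symmetry under $u\leftrightarrow 1/(qu)$; multiplying by $q^{(g-1)s}=u^{1-g}$ turns this into exactly $\xi_K(1-s)=\xi_K(s)$. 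The main obstacle is the bookkeeping in the Riemann--Roch split: one must carefully reassemble the middle-degree contributions $0\le n\le 2g-2$ into the correct symmetric polynomial, and this step relies crucially on the finiteness of the class number, which ultimately underpins both the rational-function shape and the functional equation.
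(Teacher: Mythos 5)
Your sketch is correct in outline, and it is essentially the argument behind the paper's treatment: the paper does not prove this statement at all but quotes it from \cite[Theorem 5.9]{Ros}, and Rosen's proof is exactly the classical F.~K.~Schmidt/Riemann--Roch argument you describe (count effective divisors class by class via $(q^{\ell(\mathcal{C})}-1)/(q-1)$, split the series at degree $2g-1$, and get the functional equation from the involution $[\mathcal{C}]\mapsto[\mathcal{K}-\mathcal{C}]$). Two small caveats: your geometric-series step tacitly assumes that every degree $n\ge 0$ is realized by exactly $h$ divisor classes, i.e.\ F.~K.~Schmidt's theorem that the degree map on divisor classes is surjective (plus finiteness of the degree-zero class number, which is itself a Riemann--Roch count rather than a general Dedekind-domain fact), and the argument actually produces $\deg L_K=2g$, which is the correct degree --- the ``degree $g$'' in the statement as reproduced in the paper is a misprint of Rosen's theorem.
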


The Riemann Hypothesis is known to hold for $\zeta_K(s)$, which implies the prime number theorem for $K$ below (see \cite[Proof of Theorem 5.12.]{Ros} - here we keep the dependency on $g$).

\begin{Theorem} \label{primenumbertheorem} Suppose that the genus of $K$ is $g$. Then
$$
\sum\limits_{\substack{P\\ \deg(P)=N}} 1 = \frac{q^N}{N}+O\left(\frac{gq^{N/2}}{N}\right).
$$
\end{Theorem}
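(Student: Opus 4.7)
The plan is to derive the asymptotic from Theorem \ref{Zeta} together with the Riemann Hypothesis for $\zeta_K(s)$, taking care to make the dependence on the genus $g$ explicit (this is the only deviation from Rosen's statement). First, substitute $u=q^{-s}$ and set $Z_K(u):=\zeta_K(s)=L_K(u)/\bigl((1-u)(1-qu)\bigr)$. Theorem \ref{Zeta} gives $\deg L_K = g$, and combined with the functional equation this forces a factorization $L_K(u)=\prod_{i=1}^{2g}(1-\omega_i u)$. The Riemann Hypothesis for $\zeta_K(s)$ translates into $|\omega_i|=q^{1/2}$ for every $i$.

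Second, from the Euler product $\zeta_K(s)=\prod_P\bigl(1-\mathcal{N}(P)^{-s}\bigr)^{-1}$, a standard manipulation of the logarithmic derivative yields the power-series identity
$$u\frac{Z_K'(u)}{Z_K(u)}=\sum_{n\ge 1}a_n u^n,\qquad a_n:=\sum_{\substack{P\\ \deg(P)\mid n}}\deg(P).$$
On the other hand, applying the logarithmic derivative directly to $Z_K(u)=L_K(u)/\bigl((1-u)(1-qu)\bigr)$ gives
$$u\frac{Z_K'(u)}{Z_K(u)}=\frac{u}{1-u}+\frac{qu}{1-qu}-\sum_{i=1}^{2g}\frac{\omega_i u}{1-\omega_i u}.$$
Expanding the three partial fractions as geometric series and comparing coefficients of $u^n$ yields the explicit formula $a_n=q^n+1-\sum_{i=1}^{2g}\omega_i^n$, whence the Weil bound
$$\bigl|a_n-q^n\bigr|\le 1+2gq^{n/2}.$$

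Third, I would invert this to isolate the count of primes of degree exactly $N$. Writing
$$a_N = N\cdot\#\{P:\deg P=N\}+\sum_{\substack{d\mid N\\ d<N}}d\cdot\#\{P:\deg P=d\},$$
the subsidiary sum is bounded trivially by $\sum_{d\le N/2}a_d$; summing the geometric series from the previous display shows this is $O(q^{N/2})$ (the $g$-dependent contribution only being $O(gq^{N/4})$, which is absorbed). Combining with the estimate for $a_N$ and dividing by $N$ yields $\#\{P:\deg P=N\}=q^N/N+O(gq^{N/2}/N)$, as claimed.

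The only nontrivial ingredient is the bound $|a_n-q^n|\le 1+2gq^{n/2}$, which is a direct consequence of RH for $\zeta_K$ and the fact that $\deg L_K=g$; everything else is elementary bookkeeping. The main obstacle, if any, lies in verifying that the factor $g$ really does appear linearly (and not squared or through an implicit constant) in the final error term: this is why the explicit partial-fraction computation above is essential, since it shows the dependence on $g$ enters only through the number of reciprocal roots $\omega_i$.
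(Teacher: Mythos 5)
Your argument is correct and is essentially the proof the paper relies on: the paper simply cites Rosen's proof of his Theorem 5.12 (keeping the genus dependence), which is exactly your computation — Euler product, logarithmic derivative of $Z_K(u)=L_K(u)/\bigl((1-u)(1-qu)\bigr)$, the Riemann Hypothesis bound $|\omega_i|=q^{1/2}$ on the inverse roots, and trivial handling of the proper divisors $d\mid N$; the same technique reappears in the paper's treatment of $C_N(\lambda)$ for Hecke characters. One small touch-up: $L_K$ has degree $2g$ (the paper's statement ``degree $g$'' in its Theorem on $\zeta_K$ is a slip), which is what makes your factorization into $2g$ factors $\prod_{i=1}^{2g}(1-\omega_i u)$ and the linear-in-$g$ error term consistent.
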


It follows that 
\begin{equation} \label{princ}
\sum\limits_{\substack{\mathfrak{p}\in \mathbb{P}\\ \mathcal{N}(\mathfrak{p})=q^N}} \chi_0(\mathfrak{p}) 
=\frac{q^N}{N}+O\left(\omega(\mathfrak{f})+\frac{gq^{N/2}}{N}\right),
\end{equation}
where $\omega(\mathfrak{f})$ is the number of prime ideal divisors of $\mathfrak{f}$. 

In the next subsection, we work out an upper bound for 
$$
\sum\limits_{\substack{\mathfrak{p}\in \mathbb{P}\\ \mathcal{N}(\mathfrak{p})=q^N}} \chi(\mathfrak{p})
$$
if $\chi$ is not the principal character modulo $\mathfrak{f}$.  

\subsection{Character sums over primes}
We first deal with the case when $\lambda$ is a primitive character. 
 It is convenient to switch to a new variable $u= q^{-s}.$ For each primitive Hecke  character $\lambda$ with conductor $\mathcal{F}$ and $N\in \mathbb{N}$ we define the number $C_N(\lambda)$ by  
\begin{equation} \label{(1)}
 u \frac{d}{d u}(\log L(u,\lambda))= \sum_{N=1}^{\infty} C_N(\lambda) u^N.  
\end{equation}
By Theorem \ref{RosenHecke} we have
\begin{equation} \label{fact}
     L(u, \lambda)= \alpha \displaystyle \prod_{i=1}^{2g -2 + \deg_K \mathcal{F}} (1- \alpha_i(\lambda )u)
\end{equation}
for suitable $\alpha,\alpha_i(\lambda)\in \mathbb{C}$.
Taking the logarithmic derivative on both sides of \eqref{fact}, multiplying with $u$ and comparing with \eqref{(1)}, we get
$$
C_N(\lambda)= - \sum_{i=1}^{2g -2 + \deg_K \mathcal{F}} \alpha_i(\lambda)^N.
$$
By the Riemann hypothesis for function fields, it follows that
\begin{equation}\label{CNbound}  
C_N(\lambda)= O ((g+\deg_K \mathcal{F}) q^{N/2})
=   O ((g+\log_q\mathcal{N}(\mathfrak{f}))\cdot q^{N/2}).
\end{equation}
Using \eqref{productformula}, we have 
\begin{equation*}
\begin{split}
L(s, \lambda)= &\displaystyle \prod_{d=1}^{\infty} \displaystyle \prod\limits_{\substack{ \mathfrak{p}\in \mathbb{P} \\ \mathcal{N}(\mathfrak{p})= q^d \\ }}(1- \chi(\mathfrak{p})q^{-ds})^{-1}(1-q^{-2s})^{-1}\\
 = &\displaystyle \prod_{d=1}^{\infty} \displaystyle \prod\limits_{\substack{ \mathfrak{p}\in \mathbb{P} \\ \mathcal{N}(\mathfrak{p})= q^d \\ }}(1- \chi(\mathfrak{p})u^d)^{-1}(1-u^2)^{-1}.
\end{split}
\end{equation*}
Taking the logarithmic derivative of both sides, multiplying both sides by $u$ and using $\eqref{(1)}$, we find
\begin{equation} \label{CN}
\begin{split}
  C_N(\lambda)= & \sum\limits_{\substack{j,d\in \mathbb{N}, \mathfrak{p} \in \mathbb{P} \\\mathcal{N}(\mathfrak{p})= q^d \\ dj =N}} d \chi(\mathfrak{p})^j + O(1)  \\
  = & N \sum_{\mathcal{N}(\mathfrak{p})= q^N} \chi(\mathfrak{p}) + O\left(\sum\limits_{\substack{ d|N \\ d \leq N/2 }} d \sum\limits_{\substack{\mathfrak{p}\in \mathbb{P} \\ \mathcal{N}(\mathfrak{p})=q^d}} 1 \right) + O(1)\\
  = & N \sum_{\mathcal{N}(\mathfrak{p})= q^N} \chi(\mathfrak{p}) +  O(gq^{N/2}),
\end{split}
\end{equation}
where we have used Theorem \ref{primenumbertheorem} to obtain the last line. 
Combining \eqref{CNbound} and \eqref{CN}, we deduce that
$$ 
\sum\limits_{\substack{ \mathfrak{p} \in \mathbb{P} \\ \mathcal{N}(\mathfrak{p})= q^N}} \chi(\mathfrak{p})=
O\left(\frac{(g+\log_q \mathcal{N}(\mathfrak{f}))\cdot q^{N/2}}{N}\right). 
$$
If $\chi$ is non-principal, assume $\chi'$ to be a character modulo an ideal
$\mathfrak{f}'$ dividing $\mathfrak{f}$ which induces $\chi$. In this case,  along the lines above, we have
\begin{equation} \label{chigen} \begin{split}
\sum\limits_{\substack{ \mathfrak{p} \in \mathbb{P} \\ \mathcal{N}(\mathfrak{p})= q^N}} \chi(\mathfrak{p})= &\sum\limits_{\substack{ \mathfrak{p} \in \mathbb{P} \\ \mathfrak{p}\nmid \mathfrak{f}\\ \mathcal{N}(\mathfrak{p})= q^N}} \chi'(\mathfrak{p})\\
= &
O\left(\omega(\mathfrak{f})+\frac{(g+\log_q \mathcal{N}(\mathfrak{f}'))\cdot q^{N/2}}{N}\right)\\
= & O\left(\omega(\mathfrak{f})+\frac{(g+\log_q \mathcal{N}(\mathfrak{f}))\cdot q^{N/2}}{N}\right).
\end{split}
\end{equation}
Combining \eqref{princ} and \eqref{chigen}, we get 
\begin{equation} \label{chicomb}
\sum\limits_{\substack{ \mathfrak{p} \in \mathbb{P} \\ \mathcal{N}(\mathfrak{p})= q^N}} \chi(\mathfrak{p})=
\begin{cases}
\frac{q^N}{N}  + O\left(\omega(\mathfrak{f})+\frac{(g+\log_q \mathcal{N}(\mathfrak{f}))\cdot q^{N/2}}{N}\right) & \text{if $\chi = \chi_0$ } \\
 O\left(\omega(\mathfrak{f})+\frac{(g+\log_q \mathcal{N}(\mathfrak{f}))\cdot q^{N/2}}{N}\right) & \text{if $\chi \neq \chi_0$}
\end{cases} 
\end{equation}
for any $\chi\in H(\mathfrak{f})$.  

\subsection{Counting principal ideals}
Before turning to the final calculations, we still need to establish an approximation of the form
\begin{equation} \label{idealcount}
\sum\limits_{\substack{b\in {\bf A}\\ 0<\mathcal{N}((b))\le q^U\\ \mathfrak{D}|(b)}} 1 = {\bf C}_K\cdot \frac{q^U}{\mathcal{N}(\mathfrak{D})}+O(1)
\end{equation} 
if $U\in \mathbb{N}$, where ${\bf C}_K$ is a positive constant depending on the field and the implied $O$-constant may depend on $K$. 

Writing $(b)=\mathfrak{Da}$, we have
\begin{equation} \label{idealcount2}
  \sum\limits_{\substack{b\in {\bf A}\\ 0<\mathcal{N}((b))\le q^U\\ \mathfrak{D}|(b)}} 1 =  \sum\limits_{\substack{\mathfrak{a} \mbox{\scriptsize\ integral ideal}\\ \mathfrak{Da} \mbox{\scriptsize\ principal}\\ 0<\mathcal{N}(\mathfrak{a})\le q^U/\mathcal{N}(\mathfrak{D})}} 1.
\end{equation}
We pick out the condition of $\mathfrak{Da}$ being principal using the orthogonality relation for class group characters, getting
\begin{equation} \label{idealcount2.1}
\begin{split}
\sum\limits_{\substack{\mathfrak{a} \mbox{\scriptsize\ integral ideal}\\ \mathfrak{Da} \mbox{\scriptsize\ principal}\\ 0<\mathcal{N}(\mathfrak{a})\le q^U/\mathcal{N}(\mathfrak{D})}} 1  = & \frac{1}{h} \cdot  \sum\limits_{\psi \in X(\mathcal{C})} \sum\limits_{\substack{\mathfrak{a} \mbox{\scriptsize\ integral ideal}\\ 0<\mathcal{N}(\mathfrak{a})\le q^U/\mathcal{N}(\mathfrak{D})}} \psi(\mathfrak{Da})\\
   = & \frac{1}{h} \cdot \sum\limits_{\psi \in X(\mathcal{C})} \psi(\mathfrak{D}) \sum\limits_{0 < n \le M} a_n(\psi),
\end{split}
\end{equation} 
where 
$$
q^M:= \frac{q^U}{\mathcal{N}(\mathfrak{D})} \quad \mbox{and} \quad 
a_n(\psi):= \sum\limits_{ \mathcal{N}(\mathfrak{a})= q^n }\psi(\mathfrak{a}). 
$$
The generating series for $a_n(\psi)$ is of the form
\begin{equation*}
        \sum\limits_{n=0}^{\infty}a_n(\psi) q^{-ns}= \sum\limits_{ \mathfrak{a}} \psi(\mathfrak{a}) \mathcal{N}(\mathfrak{a})^{-s} = L(\psi,s)(1-q^{-2s}).
\end{equation*}
By Theorem \ref{RosenHecke}, if $\psi$ is not the principal character, then  $L(\psi,s)(1-q^{-2s})$ is a polynomial of degree $2g$ in $q^{-s}$. By comparison of coefficients, it follows that $a_n(\psi)=0$ whenever $n>2g$ and therefore
\begin{equation} \label{suman1}
   \sum\limits_{ 0 < n \le M} a_n(\psi) = O(1)
\end{equation}
for every $M\in \mathbb{N}$, where the implied $O$-constant above depends only on $K$.

If $\psi= \psi_0$ is the principal character, then using Theorem \ref{Zeta} and writing $u:=q^{-s}$, we have 
\begin{equation*}
\begin{split}
    & \sum\limits_{n=0}^{\infty}a_n(\psi_0) u^n=  \sum\limits_{n=0}^{\infty}a_n(\psi_0) q^{-ns} =  L(\psi_0, s)(1-q^{-2s}) = 
    \zeta_K(s)(1-q^{-2s})\\ = & \frac{L_K(q^{-s})(1-q^{-2s})}{(1-q^{-s})(1- q^{1-s})} = \frac{L_K(u)(1+u)}{1-qu} = \frac{G_K(u)}{1-qu}, 
\end{split}
\end{equation*}
where $G_K(u)= L_K(u)(1+u)$ is a polynomial of degree $2g-1$. We write the fraction $G_K(u)/(1-qu)$ above as a power series. Then by comparison of coefficients, the $n$-th coefficient of this power series equals $a_n(\psi_0)$. Suppose that 
$$ 
G_K(u)= \sum\limits_{i=0}^{2g-1} c_i u^i.
$$
Then writing 
$$
\frac{1}{1-qu}=\sum\limits_{j=0}^{\infty} q^ju^j \quad \mbox{for } |u|<1/q, 
$$
we see that the $n-$th coefficient of the said power series equals
$$ 
\sum\limits_{i=0}^{\min (n, 2g-1) } c_i q^{n-i} = a_n(\psi_0).
$$
In particular, if $n\ge 2g-1$, then 
$$
a_n(\psi_0) = q^n \sum\limits_{i=0}^{2g-1} c_i q^{-i} = q^n G(q^{-1}).
$$
It follows that for all $M\in \mathbb{N}$, we have
\begin{equation}\label{suman2}
\begin{split}
    \sum\limits_{ 0 < n \le M} a_n(\psi) =& G(q^{-1}) \sum\limits_{0 < n \le M} q^n + O(1)\\ = & G(q^{-1}) \cdot  \frac{q}{q-1} \cdot  q^M + O(1) \\
    = & G(q^{-1}) \cdot  \frac{q}{q-1} \cdot \frac{q^U}{\mathcal{N}(\mathfrak{D})}  + O (1),
\end{split}
\end{equation}
where the implied $O$-constants depend on $K$. Combining \eqref{idealcount2}, \eqref{idealcount2.1}, \eqref{suman1} and \eqref{suman2}, we obtain \eqref{idealcount} with
$$
C_K:= \frac{1}{h} \cdot  G(q^{-1}) \cdot \frac{q}{q-1}. 
$$

\subsection{Final calculations}
Plugging \eqref{chicomb} into \eqref{S2}, we get
\begin{equation} \label{thefinalterm}
\begin{split}
S= & \frac{1}{h(\mathfrak{f})} \cdot  \frac{q^N}{N}\cdot  \sum\limits_{\substack{0<|b|\le |f|\delta\\
\mathfrak{E}=\mathfrak{D}}} 1 +\\ & 
O\left(\frac{\omega(\mathfrak{f})N+(g+(\log_q \mathcal{N}(\mathfrak{f}))\cdot q^{N/2})}{ h(\mathfrak{f}) N}\cdot \sum\limits_{\chi \in H(\mathfrak{f})}
\left| \sum\limits_{\substack{0<|b|\le |f|\delta\\ \mathfrak{E}=\mathfrak{D}}} \chi(ab^{-1}) \right|\right).
\end{split}
\end{equation}
We begin with estimating the character sum in the $O$-term.

Using Cauchy-Schwarz, we have
\begin{equation*}
\sum\limits_{\chi \in H(\mathfrak{f})} \left|\sum\limits_{\substack{0<|b|\le |f|\delta\\
\mathfrak{E}=\mathfrak{D}}} \chi(ab^{-1}) \right| \le 
h(\mathfrak{f})^{1/2} \left(\sum\limits_{\chi \in H(\mathfrak{f})} \left| \sum\limits_{\substack{0<|b|\le |f|\delta\\ \mathfrak{E}=\mathfrak{D}}} \chi(ab^{-1})\right|^2\right)^{1/2}.
\end{equation*}
We re-write the term on the right-hand side as 
$$
h(\mathfrak{f})^{1/2} \left(\sum\limits_{\psi \in X(\mathcal{C})} \sum\limits_{\chi \in G(\mathfrak{f})} \left| \sum\limits_{\substack{0<|b|\le  |f|\delta\\ \mathfrak{E}=\mathfrak{D}}} \psi \chi(ab^{-1})\right|^2 \right)^{1/2}.
$$
Since the class group characters $\psi$ are trivial on the principal ideals, the above equals
$$
h(\mathfrak{f})^{1/2}h^{1/2} \left(\sum\limits_{\chi \in G(\mathfrak{f})} \left| \sum\limits_{\substack{0<|b|\le  |f|\delta\\ \mathfrak{E}=\mathfrak{D}}}  \chi(ab^{-1})\right|^2 \right)^{1/2}.
$$
Expanding the modulus square and using the orthogonality relation for the character group $G(\mathfrak{f})$, the above equals
\begin{equation} \label{interm}
\begin{split}
& h(\mathfrak{f})^{1/2}h^{1/2} \left(\sum\limits_{\chi \in G(\mathfrak{f})}  \sum\limits_{\substack{0<|b_1|,|b_2|\le  |f|\delta\\ \mathfrak{E}_1=\mathfrak{D}=\mathfrak{E}_2}} \chi(b_2b_1^{-1}) \right)^{1/2}\\
= & h(\mathfrak{f})^{1/2} h^{1/2} (\sharp G(\mathfrak{f}))^{1/2} \left( \sum\limits_{\substack{0<|b_1|,|b_2|\le  |f|\delta \\ \mathfrak{E}_1=\mathfrak{D}=\mathfrak{E}_2\\ b_2b_1^{-1} \sim 1 \bmod{\mathfrak{f}}}} 1 \right)^{1/2},
\end{split}
\end{equation}
where $\mathfrak{E_i}:= \gcd ((b_i),(f))$ for $i=1,2$ and we recall that $b_2b_1^{-1} \sim 1\bmod{\mathfrak{f}}$ means that $b_2b_1^{-1}$ is multiplicatively congruent to a unit modulo $\mathfrak{f}$. We claim the following:\\ \\
{\bf Claim:} If $\delta< 1$, $0<|b_1|,|b_2|\le  |f|\delta$,  $\mathfrak{E}_1=\mathfrak{D}=\mathfrak{E}_2$ and $b_2b_1^{-1} \equiv^{\ast} 1 \bmod{\mathfrak{f}}$, then necessarily $b_1=b_2$. \\ \\
We prove this claim at the end of this subsection and proceed with our calculation. Recall the definition of $U(\mathfrak{f})$ in \eqref{Ufdef}. By the above claim, the last line in \eqref{interm} above is bounded by 
\begin{equation} \label{firststep}
\begin{split}
\le  & h(\mathfrak{f})^{1/2} h^{1/2} (\sharp G(\mathfrak{f}))^{1/2} (\sharp U(\mathfrak{f}))^{1/2} \left( \sum\limits_{\substack{0<|b|\le  |f|\delta \\ \mathfrak{E}=\mathfrak{D}}} 1 \right)^{1/2}\\
\le & h(\mathfrak{f})^{1/2} h^{1/2} (\sharp G(\mathfrak{f}))^{1/2} (\sharp U(\mathfrak{f}))^{1/2} \left( \sum\limits_{\substack{0<\mathcal{N}((b))\le  \mathcal{N}((f))\delta^2 \\  \mathfrak{D}|(b)}} 1 \right)^{1/2}\\
\ll & h(\mathfrak{f})^{1/2} h^{1/2} (\sharp G(\mathfrak{f}))^{1/2} (\sharp U(\mathfrak{f}))^{1/2} \left(\mathcal{N}(\mathfrak{D})^{-1}\mathcal{N}((f))\delta^2\right)^{1/2}\\
= & h(\mathfrak{f})^{1/2} h^{1/2} (\sharp G(\mathfrak{f}))^{1/2} (\sharp U(\mathfrak{f}))^{1/2} \mathcal{N}(\mathfrak{f})^{1/2} \delta,
\end{split}
\end{equation}
where we use \eqref{idealcount}. Here the implied $\ll$-constant depends on $K$.
Since 
\begin{equation} \label{hphirel}
h(\mathfrak{f})=h \cdot \sharp G(\mathfrak{f}) = h\cdot \frac{\varphi(\mathfrak{f})}{\sharp U(\mathfrak{f})},
\end{equation}
the last line in \eqref{firststep} above is 
$$
\le h\varphi(\mathfrak{f})\mathcal{N}(\mathfrak{f})^{1/2} \delta
$$
and hence, we obtain the final bound
\begin{equation} \label{charsumesti}
\sum\limits_{\chi \in H(\mathfrak{f})} \left|\sum\limits_{\substack{0<|b|\le |f|\delta\\
\mathfrak{E}=\mathfrak{D}}} \chi(ab^{-1}) \right| \ll_K
\varphi(\mathfrak{f})\mathcal{N}(\mathfrak{f})^{1/2} \delta.
\end{equation}

Turning to the main term on the right-hand side of \eqref{thefinalterm}, we begin with writing
$$
\frac{1}{h(\mathfrak{f})} \cdot  \frac{q^N}{N}\cdot  \sum\limits_{\substack{0<|b|\le |f|\delta\\
\mathfrak{E}=\mathfrak{D}}} 1= \frac{1}{h(\mathfrak{f})} \cdot  \frac{q^N}{N}\cdot  \sum\limits_{\substack{0<\mathcal{N}((b))\le \mathcal{N}((f))\delta^2\\
\mathfrak{D}|(b)\\ \mbox{\rm \scriptsize gcd}(\mathfrak{D}^{-1}(b),\mathfrak{f})=1}} 1.
$$
Using the relation
$$
\sum\limits_{\mathfrak{a}|\mathfrak{f}} \mu(\mathfrak{a})=\begin{cases} 1 & \mbox{ if } \mathfrak{f}=(1),\\ 0 & \mbox{ otherwise,} \end{cases}
$$
the above may be re-written in the form
\begin{equation} \label{theterm}
\frac{1}{h(\mathfrak{f})} \cdot  \frac{q^N}{N}\cdot  \sum\limits_{\mathfrak{a}|\mathfrak{f}} \mu(\mathfrak{a}) \sum\limits_{\substack{0<\mathcal{N}((b))\le \mathcal{N}((f))\delta^2\\
\mathfrak{D}\mathfrak{a}|(b)}} 1.
\end{equation}
We recall that $\delta^2=q^{-M}$ with $M\in \mathbb{N}$. In this case, using \eqref{idealcount}, we have  
\begin{equation*}
\sum\limits_{\substack{0<\mathcal{N}((b))\le \mathcal{N}((f))\delta^2\\
\mathfrak{D}\mathfrak{a}|(b)}} 1 =  {\bf C}_K\cdot \frac{\mathcal{N}((f))\delta^2}{\mathcal{N}(\mathfrak{D}\mathfrak{a})} + O(1)
=  {\bf C}_K\cdot \frac{\mathcal{N}(\mathfrak{f})\delta^2}{\mathcal{N}(\mathfrak{a})} +O(1),
\end{equation*} 
where the implied $O$-constants above may depend on $K$.
Hence, \eqref{theterm} can be written as
\begin{equation} \label{furthersimply}
 \frac{1}{h(\mathfrak{f})} \cdot  \frac{q^N}{N}\cdot  \left({\bf C}_K\sum\limits_{\mathfrak{a}|\mathfrak{f}} \frac{\mu(\mathfrak{a})}{\mathcal{N}(\mathfrak{a})} \cdot \mathcal{N}(\mathfrak{f})\delta^2+ O\left(\sum\limits_{\mathfrak{a}|\mathfrak{f}} 1\right)\right).
\end{equation}
Now we use \eqref{hphirel}, the relation
$$
\sum\limits_{\mathfrak{a}|\mathfrak{f}} \frac{\mu(\mathfrak{a})}{\mathcal{N}(\mathfrak{a})}=\frac{\varphi(\mathfrak{f})}{\mathcal{N}(\mathfrak{f})}
$$
and the bounds
$$
\sum\limits_{\mathfrak{a}|\mathfrak{f}} 1 \le 2^{\log_q \mathcal{N}(\mathfrak{f})} \quad \mbox{and} \quad \frac{1}{\varphi(\mathfrak{f})}
\le (q-1)^{-\log_q \mathcal{N}(\mathfrak{f})}
$$
corresponding to \eqref{rel1} and \eqref{rel2}, 
which are easy to deduce by looking at the prime ideal factorization of $\mathfrak{f}$. In this way, recalling $\delta^2=q^{-M}$ and \eqref{hphirel}, we arrive at the approximation
\begin{equation} \label{all2}
\frac{1}{h(\mathfrak{f})} \cdot  \frac{q^N}{N}\cdot  \sum\limits_{\substack{0<|b|\le |f|\delta\\
\mathfrak{E}=\mathfrak{D}}} 1= {\bf C}_K\cdot \frac{\sharp U(\mathfrak{f})}{h} \cdot  \frac{q^{N-M}}{N} + O\left(\frac{q^N}{N}\cdot \left(\frac{2}{q-1}\right)^{\log_q \mathcal{N}(f)}\right)
\end{equation}
for our main term.

Combining \eqref{thefinalterm}, \eqref{charsumesti}, \eqref{all2}, and again using $\delta=q^{-M/2}$ and \eqref{hphirel} and the bound
$$
\omega(\mathfrak{f})\le \log_q \mathcal{N}(\mathfrak{f}), 
$$
we get
\begin{equation*}
 S = {\bf C}_K\cdot \frac{\sharp U(\mathfrak{f})}{h} \cdot  \frac{q^{N-M}}{N} +
  O\left(\frac{q^N}{N}\cdot \left(\frac{2}{q-1}\right)^{\log_q \mathcal{N}(\mathfrak{f})}
  + \frac{q^{(N-M)/2}\log_q \mathcal{N}(\mathfrak{f})}{ N}\cdot \mathcal{N}(\mathfrak{f})^{1/2}\right).
\end{equation*}

Now we proceed similarly as at the end of section 3, where we replace $|f|$ by $\mathcal{N}(\mathfrak{f})$ and $\deg(f)$ by $\log_q \mathcal{N}(\mathfrak{f})$. Hence, we set
$$
N:=\left\lfloor \frac{2\log_q\mathcal{N}(\mathfrak{f})}{4/3-\varepsilon}\right\rfloor.
$$
Then recalling \eqref{delta2}, we see that the condition \eqref{cond2} is satisfied if $N$ is large enough. Hence, 
$$
\left(\frac{2}{3}-\frac{\varepsilon}{2}\right) N\le \log_q\mathcal{N}(\mathfrak{f}) <\left(\frac{2}{3}-\frac{\varepsilon}{2}\right) (N+1) ,
$$
and we obtain
\begin{equation} \label{endest2}
S=  {\bf C}_K\cdot \frac{\sharp U(\mathfrak{f})}{h}\cdot \frac{q^{N-M}}{N} + O_{K}\left(\frac{q^{N}}{N}\cdot \left(\frac{2}{q-1}\right)^{(2/3-\varepsilon/2)N}+
q^{(2/3+\varepsilon/4)N}\right).
\end{equation}
The main term is bounded from below by
$$
{\bf C}_K\cdot \frac{\sharp U(\mathfrak{f})}{h}\cdot \frac{q^{N-M}}{N} \gg_{K}
\frac{q^{(2/3+\varepsilon)N}}{N}.
$$
This supercedes the error term on the right-hand side of \eqref{endest2} if $N$ is sufficiently large and \eqref{qcondition} is satisfied,
which is the case if $q\ge 7$.  Under these conditions, \eqref{suff2} holds, and thus Theorem \ref{goaltheo} is established.

It remains to prove the {\bf claim} above, which is done as follows. Noting that $\mathfrak{f}$ and $\mathfrak{D}^{-1}(b_1)$ are coprime integral ideals (by $\mathfrak{E}_1=\mathfrak{D}$ and the definition of $\mathfrak{E}_1$), we have the chain of equivalences
\begin{equation*}
\begin{split}
& \ b_2\equiv b_1 \bmod{f}\\ \Longleftrightarrow &\ (f)|(b_2-b_1) \\  \Longleftrightarrow & \
\mathfrak{Df}|(b_2-b_1) \\
\Longleftrightarrow & \ \mathfrak{f}|\mathfrak{D}^{-1}(b_2-b_1)\\
\Longleftrightarrow & \ v_{\mathfrak{p}}(\mathfrak{f}) \le v_{\mathfrak{p}}(\mathfrak{D}^{-1}(b_2-b_1)) \mbox{ for all prime ideals } \mathfrak{p} \mbox{ dividing }\mathfrak{f}\\
\Longleftrightarrow & \ v_{\mathfrak{p}}(\mathfrak{f}) \le v_{\mathfrak{p}}(\mathfrak{D}^{-1}(b_2-b_1) (\mathfrak{D}^{-1}(b_1))^{-1}) \mbox{ for all prime ideals } \mathfrak{p} \mbox{ dividing }\mathfrak{f} \\
\Longleftrightarrow & \ v_{\mathfrak{p}}(\mathfrak{f})\le v_{\mathfrak{p}}((b_2b_1^{-1}-1)) \mbox{ for all prime ideals } \mathfrak{p} \mbox{ dividing }\mathfrak{f}\\
\Longleftrightarrow&  \ b_2b_1^{-1}\equiv^{\ast} 1 \bmod{\mathfrak{f}}.
\end{split}
\end{equation*}
Now if $0<|b_1|,|b_2|\le  |f|\delta<|f|$, then $|b_2-b_1|< |f|$ and hence $\mathcal{N}((b_2-b_1))<\mathcal{N}((f))$. But if $b_2\equiv b_1 \bmod{(f)}$ this implies $b_1=b_2$. Thus our {\bf claim} is proved. 

\section{Appendix by Arijit Ganguly: Dirichlet approximation in function fields}
Let $K$ be a function field of characteristic $p>1$, i.e., a finite separable extension of $\mathbb{F}_q(T)$, where $q=p^n$ for some $n\in \mathbb{N}$, and $T$ is an indeterminate. Assume that $[K:\mathbb{F}_q(T)]=d$. Denote by $M(K)$ the set of all places of $K$. Given a place $v$ of $K$, $K_{v}$ stands for the completion of $K$ with respect to $v$ which is a locally compact field. We let $\alpha_v$ denote the Haar measure on $K_v$ that has been scaled in such a way that $\alpha_v(\mathscr{O}_v)=1$, and $|\cdot|_v$ the absolute value on $K_v$ that satisfies the following for all $a\in K_v$:
\[\alpha_v(aM)=|a|_v \alpha_v(M), \text{ for any measurable } M\subseteq K_v.\]
\noindent For any $N\in \mathbb{N}$, $(K_v)^N$ is equipped with the \textit{supremum norm} defined in the natural way: $|| \overrightarrow{x}||_v:=\displaystyle \max_{1\leq i\leq N}|x_i|_v$, where $\overrightarrow{x}=(x_1,\dots,x_N)\in (K_v)^N$.\\

For $v\in M(K)$, $\mathscr{O}_v:=\{x\in K_v: |x|_v\leq 1\}$ is the maximal compact subring of $K_v$. Recall that, for any $N\in\mathbb{N}$, $(\mathscr{O}_v)^N$  is a \emph{$K_v$-lattice} (compact open $\mathscr{O}_v$ submodule) in $(K_v)^N$. Consider a finite set of places, say $S$, of $K$. The \emph{ring of $S$-integers} of $K$ is defined by the following: \[\mathscr{O}_S:=\{x\in K: x\in \mathscr{O}_v, \text{ for all } v\notin S\}.\]

We now state the \emph{Dirichlet's theorem} in this context:
\begin{Theorem}\label{main theorem}
Let $\varepsilon_v\in K_v\setminus \{0\}$ and $A_v$ be an $M\times N$ matrix over $K_v$, for any $v\in S$ and positive integers $M$ and $N$. For each $v\in S$, choose $\delta_v\in K_v\setminus \{0\}$ such that $|\delta_v|_v\geq 1$ and furthermore, \[\displaystyle \prod_{v\in S}|\delta_v|_v^N|\varepsilon_v|_v^M=q^{(M+N)(g(K)-1)+1},\] where $g(K)$ is the genus of $K$. Then there exist $\overrightarrow{x}\in (\mathscr{O}_S)^N\setminus \{\overrightarrow{0}\}$ and $\overrightarrow{y}\in (\mathscr{O}_S)^M$ satisfying the following for all $v\in S$:
\begin{equation}\label{Dirichlet}\left\{\begin{array}{rcl}||A_v\overrightarrow{x}+\overrightarrow{y}||_v
\leq |\varepsilon_v|_v, \text{ and }\\
||\overrightarrow{x}||_v\leq |\delta_v|_v\end{array}.\right.\end{equation}
\end{Theorem}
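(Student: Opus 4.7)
My plan is to prove the theorem by an adelic geometry-of-numbers argument in the spirit of Minkowski's theorem, with the measure bookkeeping controlled by the Riemann--Roch theorem for $K$.

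First I would work in the adele ring $\mathbb{A}_K$ of $K$, equipped with Haar measure the restricted product of the local measures $\alpha_v$. Riemann--Roch implies that $K$ diagonally embedded is discrete and cocompact in $\mathbb{A}_K$ with covolume $q^{g(K)-1}$, so $K^{M+N}$ sits in $\mathbb{A}_K^{M+N}$ as a discrete subgroup of covolume $q^{(M+N)(g(K)-1)}$. Next I would cut out a compact open subgroup $B\subseteq\mathbb{A}_K^{M+N}$ encoding \eqref{Dirichlet}: take $B_v=(\mathscr{O}_v)^{M+N}$ for $v\notin S$ and, for $v\in S$, set
\[
  B_v=\bigl\{(\overrightarrow{x},\overrightarrow{y})\in K_v^N\times K_v^M : ||A_v\overrightarrow{x}+\overrightarrow{y}||_v\le|\varepsilon_v|_v,\ ||\overrightarrow{x}||_v\le|\delta_v|_v\bigr\}.
\]
The key observation is that $B_v$ is the preimage of the product box $\{||\overrightarrow{u}||_v\le|\varepsilon_v|_v,\ ||\overrightarrow{z}||_v\le|\delta_v|_v\}$ under the $K_v$-linear isomorphism $(\overrightarrow{x},\overrightarrow{y})\mapsto(A_v\overrightarrow{x}+\overrightarrow{y},\overrightarrow{x})$, which factors as a unipotent shear followed by a coordinate swap and therefore has Jacobian of absolute value $1$; hence $\alpha(B)=\prod_{v\in S}|\delta_v|_v^N|\varepsilon_v|_v^M=q^{(M+N)(g(K)-1)+1}$. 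Since ultrametric balls about the origin are additive subgroups, $B$ is itself a compact open subgroup of $\mathbb{A}_K^{M+N}$.

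The Minkowski step is then immediate: because $\alpha(B)$ equals $q$ times the covolume of $K^{M+N}$, the composite $B\hookrightarrow\mathbb{A}_K^{M+N}\twoheadrightarrow\mathbb{A}_K^{M+N}/K^{M+N}$ cannot be injective, and since both $B$ and $K^{M+N}$ are subgroups, their intersection $B\cap K^{M+N}$ is a nonzero $\mathbb{F}_q$-vector space of cardinality at least $q$. Any nonzero element $(\overrightarrow{x},\overrightarrow{y})\in B\cap K^{M+N}$ immediately produces $\overrightarrow{x}\in(\mathscr{O}_S)^N$ and $\overrightarrow{y}\in(\mathscr{O}_S)^M$ fulfilling both inequalities of \eqref{Dirichlet}.

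I expect the main obstacle to be guaranteeing $\overrightarrow{x}\neq\overrightarrow{0}$: \emph{a priori} the lattice points furnished by the measure argument might all lie in the ``bad'' subspace $\{\overrightarrow{0}\}\times\{\overrightarrow{y}\in K^M : ||\overrightarrow{y}||_v\le|\varepsilon_v|_v \text{ for } v\in S,\ \overrightarrow{y}\in\mathscr{O}_v^M \text{ for } v\notin S\}$, which can have positive $\mathbb{F}_q$-dimension when $\prod_{v\in S}|\varepsilon_v|_v$ is large. Ruling this out requires using the hypothesis $|\delta_v|_v\ge 1$---which forces the $\overrightarrow{x}$-projection of $B$ to contain at least the constants $\mathbb{F}_q^N\subseteq K^N$---combined with a sharper dimension count through the short exact sequence $0\to\mathbb{A}_K^M/K^M\to\mathbb{A}_K^{M+N}/K^{M+N}\to\mathbb{A}_K^N/K^N\to 0$ and Riemann--Roch (with Serre duality) applied to both the $\overrightarrow{x}$ and $\overrightarrow{y}$ pieces. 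Carrying out this comparison so as to locate a lattice point with nonzero first component is the technical heart of the argument.
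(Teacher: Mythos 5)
Your construction of the adelic box $B$, the determinant-one shear computation $\alpha(B)=\prod_{v\in S}|\varepsilon_v|_v^{M}|\delta_v|_v^{N}=q^{(M+N)(g(K)-1)+1}$, and the pigeonhole step are sound, and up to packaging they coincide with the paper's argument: the paper encodes your shear and scalings in the block upper-triangular adelic matrix $B_v$ with diagonal blocks $\varepsilon_v^{-1}I_M$, $\delta_v^{-1}I_N$ and upper block $\varepsilon_v^{-1}A_v$ (and $B_v=I_{M+N}$ off $S$), and then quotes Thunder's adelic Minkowski theorem, whose hypothesis $|\det(B)|_{\mathbb{A}}=q^{(M+N)(1-g(K))-1}<q^{(M+N)(1-g(K))}$ is exactly your measure comparison; your direct ``open subgroup of measure exceeding the covolume meets the lattice nontrivially'' argument is a legitimate substitute for that citation.

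The genuine gap is the step you explicitly defer: showing the lattice point can be taken with $\overrightarrow{x}\neq\overrightarrow{0}$. Your proposed route (the short exact sequence, Riemann--Roch with Serre duality, a ``sharper dimension count'') is not carried out, and it is also not the right tool: a covolume or dimension count alone cannot tell you which block of a nonzero lattice vector is nonzero. The paper disposes of this in two lines via the product formula: if $\overrightarrow{x}=\overrightarrow{0}$, then $\overrightarrow{y}\neq\overrightarrow{0}$, and choosing a nonzero coordinate $y_j\in K^{\times}$, the first inequality in \eqref{Dirichlet} gives $|y_j|_v\le|\varepsilon_v|_v<1$ for $v\in S$, while $y_j\in\mathscr{O}_v$, i.e.\ $|y_j|_v\le 1$, for $v\notin S$; hence $\prod_{v\in M(K)}|y_j|_v<1$, contradicting $\prod_{v\in M(K)}|y_j|_v=1$. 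Note that what makes this work is the smallness of the $\varepsilon_v$ (the proof uses $|\varepsilon_v|_v<1$, the regime relevant for Dirichlet approximation), not the hypothesis $|\delta_v|_v\ge 1$ in the way you suggest; your observation that the ``bad'' subspace $\overrightarrow{x}=\overrightarrow{0}$ can be nontrivial when the $\varepsilon_v$ are large is correct, but the resolution is the product formula, and in the case $|\varepsilon_v|_v<1$ your contemplated Riemann--Roch count would in any event reduce to the trivial statement that a divisor of negative degree has no nonzero global sections, which is the product formula in disguise. Without this step your proof is incomplete, since the nonzero element of $B\cap K^{M+N}$ you produce could a priori have vanishing $\overrightarrow{x}$-component.
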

Theorem \ref{main theorem} will be proved using the adelic version of Minkowski's \emph{convex body theorem}, established in the paper \cite{Thunder}. Recall that, the \textit{ring of adeles} of $K$, denoted by $K_{\mathbb{A}}$, is defined as the set of all elements $(x_v)_{v\in M(K)}$ in $\displaystyle \prod_{v\in M(K)}K_v$ such that $|x_v|_v\leq 1$, for almost all $v\in M(K)$. One has the following diagonal embedding of $K$ inside $K_{\mathbb{A}}$: 
\[K\hookrightarrow K_{\mathbb{A}}, \alpha \mapsto (\alpha, \alpha,\dots), \, \forall \alpha \in K.\]
\noindent The \emph{idele group} of $ K_{\mathbb{A}}$, denoted by $ K_{\mathbb{A}}^\times$, is the group of all invertible elements of $ K_{\mathbb{A}}$. For every element $x=(x_v)_{v\in M(K)}\in K_{\mathbb{A}}^\times$, we will write \[|x|_{\mathbb{A}}= \displaystyle \prod_{v\in M(K)}|x_v|_v.\]
In fact, $|\cdot|_{\mathbb{A}}$ is the module on $K_{\mathbb{A}}^\times$ (see Chapter IV, \cite{Weil}). We have the following product formula (see \cite[Theorem 5, Chapter IV]{Weil}):
\begin{equation}\label{formula}
|x|_{\mathbb{A}}= \displaystyle \prod_{v\in M(K)}|x|_v=1, \text{ for all }x\in K^\times.
\end{equation}

The ring $K_{\mathbb{A}}$ is a locally compact topological ring. We denote the following Haar measure on $K_{\mathbb{A}}$ by $\alpha_{\mathbb{A}}$:
\[q^{1-g(K)}\displaystyle \prod_{v\in M(K)}\alpha_v.\] A measurable subset of the $N$-fold product $(K_{\mathbb{A}})^N$ of the ring of adeles of $K$, where $n$ is a positive integer, is said to be a \emph{star body} if it contains $\mathbf{0}$ and, for every $\mathbf{x}\in S$, $a\mathbf{x}$ lies in the interior of $S$ whenever $a\in K_{\mathbb{A}}$ is a unit in the ring $K_{\mathbb{A}}$ and satisfying $|a_v|_v\leq 1$ for all place $v\in M(K)$. The following special case of the Theorem 3 in \cite{Thunder} plays the main role in proving Theorem \ref{main theorem}:
\begin{Theorem} \label{thunder}Let $K$ be a function field and  $N\in \mathbb{N}$. Then for any $A\in \GL_N(K_{\mathbb{A}})$ with $|\det(A)|_{\mathbb{A}}< q^{N(1-g(K))}$, one has 
	\[A(K^N)\bigcap \displaystyle \prod_{v\in M(K)} (\mathscr{O}_v)^N\neq \{\vec{0}\}.\]
	\end{Theorem}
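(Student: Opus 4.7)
The plan is to derive the statement from an adelic Minkowski/Blichfeldt-type pigeonhole argument. The two ingredients one needs are: (i) the diagonal embedding $K^N\hookrightarrow (K_{\mathbb{A}})^N$ has discrete cocompact image whose covolume with respect to $\alpha_{\mathbb{A}}^N$ equals $1$; and (ii) the body $S:=\prod_{v\in M(K)}(\mathscr{O}_v)^N$ has $\alpha_{\mathbb{A}}^N$-volume exactly $q^{N(1-g(K))}$. Point (ii) is immediate from the definitions, since $\alpha_{\mathbb{A}}(\prod_v\mathscr{O}_v)=q^{1-g(K)}$ and the $N$-fold product yields $\alpha_{\mathbb{A}}^N(S)=q^{N(1-g(K))}$; point (i) is essentially Weil's adelic Riemann-Roch theorem, which is precisely why the factor $q^{1-g(K)}$ was built into the normalization of $\alpha_{\mathbb{A}}$ in the first place.

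Granting these, I would translate the hypothesis into a strict volume excess. Because $|\cdot|_{\mathbb{A}}$ is the module on $K_{\mathbb{A}}^\times$, the substitution $x\mapsto A^{-1}x$ scales $\alpha_{\mathbb{A}}^N$ by $|\det A|_{\mathbb{A}}^{-1}$, so the hypothesis $|\det A|_{\mathbb{A}}<q^{N(1-g(K))}$ is equivalent to
\[
\alpha_{\mathbb{A}}^N\bigl(A^{-1}S\bigr)\;=\;\frac{q^{N(1-g(K))}}{|\det A|_{\mathbb{A}}}\;>\;1\;=\;\operatorname{vol}\bigl((K_{\mathbb{A}})^N/K^N\bigr).
\]

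Next I would run the Blichfeldt argument. Fix a measurable fundamental domain $F$ for $K^N$ in $(K_{\mathbb{A}})^N$ with $\alpha_{\mathbb{A}}^N(F)=1$. The translates $\{F+\gamma\}_{\gamma\in K^N}$ partition $(K_{\mathbb{A}})^N$, so the subsets $(A^{-1}S-\gamma)\cap F$ of $F$ have total $\alpha_{\mathbb{A}}^N$-measure equal to $\alpha_{\mathbb{A}}^N(A^{-1}S)>1$. Pigeonhole therefore produces distinct $\gamma_1,\gamma_2\in K^N$ and a common point $z\in F$ with $z+\gamma_1,\,z+\gamma_2\in A^{-1}S$. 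Here the non-archimedean structure does the decisive work: $A^{-1}S=\prod_v A_v^{-1}(\mathscr{O}_v)^N$ is a compact open additive subgroup of $(K_{\mathbb{A}})^N$ at each place, hence closed under differences; so $\gamma_0:=\gamma_1-\gamma_2$ is a nonzero element of $K^N$ lying in $A^{-1}S$. Equivalently, $A\gamma_0\in S=\prod_v(\mathscr{O}_v)^N$, which produces a (nonzero, hence nontrivial) intersection point in $A(K^N)\cap\prod_v(\mathscr{O}_v)^N$.

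The two delicate points are (a) the covolume normalization $\operatorname{vol}((K_{\mathbb{A}})^N/K^N)=1$, which is not a routine bookkeeping step and requires invoking the adelic Riemann-Roch theorem in \cite{Weil}; and (b) the disappearance of the archimedean ``factor of $2$'' in the usual Minkowski theorem, which is purely a consequence of the ultrametric inequality and allows the conclusion $A^{-1}S-A^{-1}S\subseteq A^{-1}S$ to be read off directly. Subject to (a), everything else is a short measure-theoretic pigeonhole computation together with the change-of-variable formula for the idelic module $|\det A|_{\mathbb{A}}$.
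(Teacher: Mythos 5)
Your argument is correct, but it cannot be compared with a proof in the paper because the paper contains none: Theorem \ref{thunder} is quoted as a special case of Theorem 3 of \cite{Thunder}, and only its application (the proof of Theorem \ref{main theorem}) is worked out. Your Blichfeldt-style pigeonhole is the standard route to such adelic Minkowski statements, and the two inputs you isolate are exactly the right ones: the covolume of $K^N$ in $(K_{\mathbb{A}})^N$ equals $1$ for the normalization $\alpha_{\mathbb{A}}=q^{1-g(K)}\prod_v\alpha_v$, which is the Riemann--Roch computation (the unscaled measure of $K_{\mathbb{A}}/K$ is $q^{g(K)-1}$, see \cite{Weil}; note this presupposes that $\mathbb{F}_q$ is the full constant field of $K$, as in Thunder's setting), and the volume of $\prod_v(\mathscr{O}_v)^N$ is $q^{N(1-g(K))}$, which is the paper's own equation \eqref{volume}. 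Your ultrametric observation that $A^{-1}S=\prod_v A_v^{-1}(\mathscr{O}_v)^N$ is a compact open additive subgroup (using that $A_v\in\GL_N(\mathscr{O}_v)$ for almost all $v$), hence closed under differences, correctly replaces the convexity, symmetry and factor $2^N$ of the classical argument, and the change-of-variables by $|\det A|_{\mathbb{A}}$ together with the strict inequality in the hypothesis makes the pigeonhole go through. One further point in your favour: as literally stated the theorem is vacuous, since $0$ always lies in $A(K^N)\cap\prod_v(\mathscr{O}_v)^N$; what is actually needed in the proof of Theorem \ref{main theorem} is a \emph{nonzero} point of $K^N$ with $A\gamma_0\in\prod_v(\mathscr{O}_v)^N$, and your argument delivers precisely that strengthening. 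So your proposal is a correct, essentially self-contained substitute for the citation, modulo the covolume normalization you explicitly flag, which must indeed be imported from \cite{Weil} (adelic Riemann--Roch) rather than obtained by bookkeeping.
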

\begin{proof}[Proof of Theorem  \ref{main theorem}] For each $v\in S$, we consider the following square matrix of order $(M+N)$ with entries from the field $K_v$: 
	\[B_v:=\left(\begin{array}{rcl}\varepsilon_v^{-1}I_M & \varepsilon_v^{-1}A_v\\
	\textbf{O} & \delta_v^{-1}I_N 
	\end{array}
	\right),
	\] where $I_M$ and $I_N$ denote the identity matrices of order $M$ and $N$ respectively. For any other place $v$, we set $B_v:=I_{M+N}$. Clearly, $B:=(B_v)_{v\in M(K)}\in  \GL_N(K_{\mathbb{A}})$ and   \begin{equation}\label{det}
|\det(B)|_{\mathbb{A}}= \displaystyle \prod_{v\in S}|\delta_v|_v^{-N}|\varepsilon_v|_v^{-M}=q^{(M+N)(1-g(K))-1}.
	\end{equation}
	On the other hand, one observes that the measure of $ \left(\displaystyle \prod_{v\in M(K)} \mathscr{O}_v\right)^{M+N}$ is the following:  \begin{equation}\label{volume}\alpha_{\mathbb{A}}^{M+N}\left(\displaystyle \prod_{v\in M(K)} (\mathscr{O}_v)^{M+N}\right)=q^{(1-g(K))(M+N)}.\end{equation}From \eqref{det} and \eqref{volume}, in view of Theorem \ref{thunder}, one concludes that, there exists  $(\overrightarrow{y}, \overrightarrow{x})\in K^{M+N}\setminus \{\overrightarrow{0}\}$ such that $\left|\left|B_v\left(\begin{array}{rcl}\overrightarrow{y}\\ \overrightarrow{x}\end{array}\right)\right|\right|_v\leq 1$, for every $v\in M(K)$. Since $B_v=I_{M+N}$ for all $v\neq S$, so we have $(\overrightarrow{y}, \overrightarrow{x})\in (\mathscr{O}_S)^{M+N}$. The system of inequalities given in \eqref{Dirichlet} is thus satisfied by $(\overrightarrow{y}, \overrightarrow{x})$ for every $v\in S$.  It only remains to show that $\overrightarrow{x} \neq \overrightarrow{0}$. \\
	
	If $\overrightarrow{x}=\overrightarrow{0}$ then $\overrightarrow{y}\neq \overrightarrow{0}$.  Writing $\overrightarrow{y}=(y_1,\dots, y_M)$, where $y_1,\dots, y_M \in K$, one has $j\in \{1,\dots, M\}$ such that $y_j\neq 0$. Since $\overrightarrow{x}=\overrightarrow{0}$, it follows from \eqref{Dirichlet} that, $||\overrightarrow{y}||_v\leq |\varepsilon_v|_v<1$, for all $v\in S$. This provides us with the following:
	\[\left\{\begin{array}{rcl}|y_j|_v< 1 && \text{ if }v\in S\\ |y_j|_v\leq 1 && \text{ if }v\notin S
	\end{array}\right.,\] which in turn implies that  $|y_j|_{\mathbb{A}}=\displaystyle \prod_{v\in M(K)} |y_j|_v<1$, contradicting the product formula given by \eqref{formula}. The proof of Theorem \ref{main theorem} is hereby complete.
	\end{proof} 
	
	As an immediate corollary of Theorem \ref{main theorem}, we get the following version of Dirichlet’s theorem for function fields. See \cite{BV} for the analogous statement in number fields which in turn is deduced from \cite{Weil}. For a finite set of places $S$, we denote by $K_S$ the product of the completions $K_v$ as $v$ ranges over $S$ and $\iota_S$ is the diagonal
embedding of $K$ into $K_S$.

\begin{Theorem} \label{consequence} There exists a constant $C> 0$ depending only on $K$ and $S$, such that for every $x \in K_S$ and for every $Q>0$, there exist infinitely many $p \in \mathscr{O}_S$, $q \in \mathscr{O}_S \setminus\{0\}$ with
$$
|| \iota_S(q) \cdot x + \iota_S(p) || \le C||\iota_S(q)||^{-1}.
$$
\end{Theorem}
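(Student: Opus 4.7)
The plan is to deduce Theorem \ref{consequence} from Theorem \ref{main theorem} applied with $M=N=1$ and $A_v=x_v\in K_v$ for each $v\in S$. A single application will furnish a pair $(q,p)\in (\mathscr{O}_S\setminus\{0\})\times \mathscr{O}_S$ satisfying
$$
|qx_v+p|_v\le |\varepsilon_v|_v,\qquad |q|_v\le |\delta_v|_v\qquad (v\in S),
$$
so the whole problem reduces to an intelligent choice of the auxiliary data $\{\delta_v,\varepsilon_v\}_{v\in S}$ obeying the product constraint
$$
\prod_{v\in S}|\delta_v|_v\,|\varepsilon_v|_v = q^{2(g(K)-1)+1},
$$
and delivering $\max_v|\delta_v|_v$ and $1/\max_v|\varepsilon_v|_v$ of a comparable size.

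Concretely, for a scaling parameter $Q>1$ I would choose $\delta_v$ so that $|\delta_v|_v$ is the smallest element of the value group $|K_v^\times|_v$ exceeding $Q$; this guarantees $Q\le |\delta_v|_v \le \kappa_v Q$ for a constant $\kappa_v$ depending only on the place $v$. The residual mass $q^{2(g(K)-1)+1}/\prod_{v\in S}|\delta_v|_v$, which is of order $Q^{-|S|}$, is then distributed among the $|\varepsilon_v|_v$ so that each is comparable to $Q^{-1}$; because the value groups are discrete, I would fix one distinguished place $v_0\in S$ to absorb the rounding discrepancy while the other $|\varepsilon_v|_v$ are set to prescribed powers of a uniformizer. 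Plugging the resulting $(q,p)$ into the conclusion of Theorem \ref{main theorem} yields $\max_v|q|_v \ll_{K,S} Q$ and $\max_v|qx_v+p|_v \ll_{K,S} Q^{-1}$, and combining these inequalities gives
$$
\|\iota_S(q)\cdot x+\iota_S(p)\| \le C\,\|\iota_S(q)\|^{-1}
$$
with $C=C(K,S)$, as required.

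For the infinitude of pairs, I would let $Q$ range through a sequence $Q_n\to\infty$ tailored to the discrete value groups, producing pairs $(p_n,q_n)$. If only finitely many distinct ones appear, some $(p,q)$ satisfies the bound for infinitely many $n$, which forces $qx_v+p=0$ for every $v\in S$, that is $x\in \iota_S(K)$; in this degenerate case, multiplying the exact identity $qx+p=0$ by arbitrary nonzero elements of $\mathscr{O}_S$ already produces infinitely many admissible pairs. The only genuine (and rather mild) obstacle is the bookkeeping required to meet the product constraint of Theorem \ref{main theorem} exactly while keeping both $|\delta_v|_v$ and $|\varepsilon_v|_v^{-1}$ comparable to $Q$; once that is handled, everything else is essentially forced.
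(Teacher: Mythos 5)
Your route is exactly the one the paper intends: Theorem \ref{consequence} is stated there as an immediate corollary of Theorem \ref{main theorem}, and your instantiation with $M=N=1$, $A_v=x_v$, the scaling $|\delta_v|_v\asymp Q$, $|\varepsilon_v|_v\asymp Q^{-1}$, letting $Q\to\infty$, and the separate treatment of the degenerate case $x\in\iota_S(K)$ is precisely the bookkeeping the paper leaves unsaid. The pieces you do spell out are fine: for large $Q$ one has $|\varepsilon_v|_v<1$, which is what the proof of Theorem \ref{main theorem} implicitly uses to force the denominator $q\neq 0$, and the passage from $\max_v|q|_v\ll_{K,S}Q$, $\max_v|qx_v+p|_v\ll_{K,S}Q^{-1}$ to the stated inequality, as well as the multiplication trick when $x\in\iota_S(K)$, are correct.

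The one step you single out as ``rather mild'' is, however, where the argument as written breaks: the product constraint of Theorem \ref{main theorem} cannot always be met \emph{exactly}, no matter how you round at a distinguished place. At a place $v$ the value group of $|\cdot|_v$ is $q^{(\deg v)\mathbb{Z}}$, so $\prod_{v\in S}|\delta_v|_v|\varepsilon_v|_v$ ranges over $q^{d\mathbb{Z}}$ with $d=\gcd\{\deg v:v\in S\}$, whereas the required value is $q^{2(g(K)-1)+1}=q^{2g-1}$. If $d\nmid 2g-1$ there is no admissible choice at all; and this is not a fringe case but exactly the setting in which the body of the paper invokes the corollary, namely $S=\{\infty\}$ for an imaginary quadratic $K$, where $\deg\infty=2$ and $2g-1$ is odd. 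The repair is easy but must be stated: the proof of Theorem \ref{main theorem} only needs $|\det B|_{\mathbb{A}}<q^{(M+N)(1-g(K))}$ in order to apply Theorem \ref{thunder}, so the equality can be relaxed to $\prod_{v\in S}|\delta_v|_v^N|\varepsilon_v|_v^M\ge q^{(M+N)(g(K)-1)+1}$ (equivalently, aim for the smallest attainable power of $q$ strictly exceeding $q^{(M+N)(g(K)-1)}$); with that relaxation your choices of $\delta_v,\varepsilon_v$ go through verbatim and only the constant $C=C(K,S)$ changes. As written, though, ``meet the product constraint exactly'' is a step that can genuinely fail, so you should either prove and use the relaxed form of Theorem \ref{main theorem} or otherwise account for the divisibility obstruction.
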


\end{document}